\theoremstyle{plain}
\DeclareMathAlphabet{\mathbbmsl}{U}{bbm}{m}{sl}
\def\diag{\mathop{\rm diag}}
\def\min{\mathop{\rm min}}
\def\d{{\bf d}}
\def\cs{cS-$W$}
\def\Rr{\mathbb{R}^n}
\def\min{\mathop{\rm min}}
\def\R{\mathbb{R}^{n\times n}}
\def\x{{\bf{x}}}
\def\y{{\bf{y}}}
\newtheorem{theorem}{Theorem}[section]
\newtheorem{lemma}[theorem]{Lemma}
\newtheorem{corollary}[theorem]{Corollary}
\newtheorem{proposition}[theorem]{Proposition}
\newtheorem{example}[theorem]{Example}
\newtheorem{rem}{Remark}
\newtheorem{definition}{Definition}
\begin{document}
	\begin{center}
	{\bf	On Column sufficiency  and Extended Horizontal Linear Complementarity Problem}
	
		\textsc{Punit Kumar Yadav$^{a}$, K. Palpandi$^{b}$}\\
	$^{a}$Department of Mathematics, JECRC University, 303905, India.\\
	
	$^{b}$Department of Mathematics, National Institute of Technology Calicut, 673601, India.\\
	E-mail address: punitjrf@gmail.com\\\end{center}
	\begin{abstract}
In this article, we introduce the concept of the column-sufficient $W$-property for a set of matrices and prove the convexity of the solution set for the Extended Horizontal Linear Complementarity Problem. Additionally, we present an uniqueness result and establish several results related to the column-sufficient $W$-property.
	\end{abstract}
{\it Keywords:}
Column Sufficient-$W$ property, Column-$W$ property, $M$ matrices, Sufficient Matrices, Complementarity Problems, Convexity.

% REQUIRED
{\it MSCcodes}
15B99, 90C33.
\section{Introduction}
A real $n \times n$ matrix $C$ is a column-sufficient matrix if 
$$x \in \mathbb{R}^n, ~x * Cx \leq 0 \implies x * Cx = 0,$$ 
where $'*'$ denotes the pointwise product. Initially, Cottle et al. \cite{sufficient} introduced the concept of a column-sufficient matrix, and proved that the class of column-sufficient matrices ensures the convexity of the solution set of linear complementarity problem (LCP). The LCP($C,q$) is to find a vector $x\in\Rr$ such that 
\begin{equation*}
	x\in\Rr_{+},~y = Cx + q \in \mathbb{R}^n_+ \text{,~and}~ x * y = 0,
\end{equation*}
where $C\in\mathbb{R}^{n\times n}$ and $q\in\Rr$. The LCP has wide-ranging applications in optimization, economics, and game theory, see \cite{sufficient,LCP,elcp,PP0}. Over the past three decades, many generalizations of the LCP have been studied in the literature (see \cite{wcp,telcp,elcp}). Many important properties have been explored in relation to the class of column-sufficient matrices, see \cite{sufficient,LCP}. For example, the $P$ matrix property ($[x \in \mathbb{R}^n, ~x * Cx \leq 0 \implies x = 0]$) is equivalent to the column-sufficient matrix property under the nondegeneracy condition ($[x \in \mathbb{R}^n, ~x * Cx = 0 \implies x = 0]$). For the class of $Z$-matrices, the column-sufficient matrix property is also equivalent to the column-sufficient matrix property on $\Rr_+$, see in \cite{LCP}.

Mangasarian and Pang \cite{elcp} has expanded the concept of the column sufficient property to the X-column-sufficiency property $([x_0*x_1\leq0,~ C_0x_0-C_1x_1=0\implies (x_0,x_1)=(0,0)])$ within the context of horizontal linear complementarity problem (HLCP). Furthermore, Gowda \cite{glcp} introduced and characterized column-sufficiency and $P$ properties specific to vertical, horizontal, and mixed LCPs. He also proves that the X-column-sufficiency property is equivalent the convexity of solution set for HLCP. For a pair of real $n\times n$ matrices $C_0$ and $C_1$, along with a vector $q\in\Rr$, the HLCP seeks a pair of vectors $(x,y)\in\Rr\times\Rr$ that satisfy the conditions:
\begin{equation*}
	\begin{aligned}
		C_0 x-C_1y=q~~\text{and}~~x\wedge y=0.
	\end{aligned}
\end{equation*}

The HLCP has gained significant research attention due to its extensive applications across various domains (see references \cite{telcp,homo,elcp}). The HLCP is generalized, in context to the set of matrices, as the Extended horizontal linear complementarity problem, see \cite{glcp,szn,PP0}. For an ordered set of matrices ${\bf C}:=\{C_0,C_1,...,C_k\} \subseteq \mathbb{R}^{n \times n}$, a vector $q\in \mathbb{R}^n$, and an ordered set of positive vectors ${\bf d}:=\{d_{1},d_{2},...,d_{k-1}\} \subseteq \mathbb{R}^n$, the Extended Horizontal Linear Complementarity Problem, denoted EHLCP(${\bf C},{\bf d},q$), aims to find vectors $x_{0},x_{1},...,x_{k} \in \mathbb{R}^n$ that satisfy the conditions:\begin{equation*}\label{e1}
	\begin{aligned}
		C_0 {{x}_{0}} =&q+\sum_{i=1}^{k} C_i {x}_{i},\\ 
		~~ {x}_{0} \wedge {x}_{1}=0 ~~\text{and}~~ (d_{j}-&{x}_{j})\wedge {x}_{j+1}=0 ~\forall j\in[k-1].\\
	\end{aligned}
\end{equation*}
\noindent When $k=1$, EHLCP reduces to the HLCP. Furthermore, if we set \( C_0 = I,k=1 \), the EHLCP becomes to the standard LCP. A comprehensive analysis of the EHLCP can be found in the article by Sznajder and Gowda \cite{PP0}. Recently, Yadav and Palpandi \cite{pk,cnd} have extended the concepts of \( R_0 \), SSM, and nondegenerate matrix properties in the context of the EHLCP to study the solution set of the EHLCP.

Motivated by the column-sufficient property in context with LCP and HLCP, we pose the question: Can we define column-sufficient complementarity conditions for sets of matrices and demonstrate the convexity of the solution set for the EHLCP? To address this, we introduce the notion of the column sufficient-$W$ property for sets of matrices. Our contributions include the following:
\begin{itemize}
	\item[(i)] We define the column sufficient-$W$ property and prove the convexity of the solution set for EHLCP.
	\item[(ii)] We prove that the EHLCP has a unique solution for $q>0$ due to the column sufficient-$W$ property. 
	\item[(ii)] We present relationship between  the column $W$-property and the column sufficient-$W$ property. Also, we prove a result related to $Z$-matrices in context of column sufficient-$W$ property.
	
\end{itemize}
The structure of the paper is as follows: Section 2 presents essential definitions and  necessary results for subsequent discussions. In next section, we introduce the column sufficient-\( W \)property and establish our main theorem. The final section examines some properties related to the the column \( W \)-property and the column sufficient-\( W \) property.
\section{Notation and Preliminaries}
\subsection{Notation}Throughout this paper, we use the following notation: \begin{itemize}
	\item[(i)] $\mathbb{R}^n$ denotes the $n$-dimensional Euclidean space with the usual inner product.  The set of all nonnegative vectors (resp., positive vectors) in $\mathbb{R}^n$ is denoted by $\mathbb{R}^n_+$ (resp., $\mathbb{R}^n_{++}$). We write $x \geq 0$ (resp., $x > 0$) if and only if $x \in \mathbb{R}^n_+$ (resp., $x \in \mathbb{R}^n_{++}$).
	\item [(ii)] The notation $[n]$ represents the set $\{1, 2, \ldots, n\}$.
	\item[(iii)]  For $u, v \in \mathbb{R}$, $u * v$ denotes the pointwise product of $u$ and $v$, i.e., $(u * v)_i = u_i v_i$ for all $i \in [n]$. Similarly, $u \wedge v$ denotes the pointwise minimum of $u$ and $v$, i.e., $(u \wedge v)_i = \min\{u_i, v_i\}$ for all $i \in [n]$.
	\item [(iv)] The $k$-ary Cartesian power of $\Rr$ (resp., $\Rr_{++}$) is denoted by $\Lambda^{(k)}_n$ (resp., $\Lambda^{(k)}_{n,++}$). The bold zero $\mathbf{0}$ represents the zero vector $(0, 0, \ldots, 0) \in \Lambda^{(k)}_n$.
	
	\item [(v)] The set of all $n\times n$ real matrices denotes $\R$. The symbol $\Lambda^{(k)}_{n\times n}$ is used to denote the $k$-ary Cartesian product of  $\R$. 
	\item [(vi)] For a matrix $A\in\R$,  $\text{diag}(A)=(A_{11},A_{22},...,A_{nn})\in \Rr$ denotes a vector in $\Rr$ , where $A_{ii}$ is the $ii^{\rm th}$ diagonal entry of matrix $A$ and $(A)_{ij}$ denotes the $ij^{th}$ entry of matrix $A$.  $\text{det}(A)$ denotes the determinant of matrix $A$.
	\item[(vii)] SOL(${\bf C}, \d, q$) represents the set of all solutions to EHLCP($\mathbf{C}, \d, q$).
\end{itemize} 
We now recall some definitions and results from the LCP theory for later use.

\begin{proposition}[\cite{wcp}]\label{msd}
	Let $x,y\in\mathbb{R}^n.$ Then, the following statements are equivalent.
	\begin{itemize}
		\item [\rm(i)] $x\wedge y=0.$
		\item [\rm(ii)] $x,y\geq 0$ and $~x*y=0,$ where $*$ is the Hadamard product.
		\item[\rm(iii)] $x,y\geq 0~\text{and}~\langle x,y\rangle=0.$
\end{itemize}	\end{proposition}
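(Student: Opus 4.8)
The plan is to prove the three conditions equivalent by establishing the cycle $\mathrm{(i)}\Rightarrow\mathrm{(ii)}\Rightarrow\mathrm{(iii)}\Rightarrow\mathrm{(i)}$, and since each of the three statements is merely a conjunction of $n$ scalar conditions, one for every index $i\in[n]$, the whole argument can be carried out coordinatewise.

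First, for $\mathrm{(i)}\Rightarrow\mathrm{(ii)}$, fix $i\in[n]$ and use that $\min\{x_i,y_i\}=0$. The key observation is that this single scalar equation already forces both $x_i\ge 0$ and $y_i\ge 0$, because if either coordinate were negative the minimum would be negative; and it forces at least one of $x_i,y_i$ to equal $0$, so that $x_iy_i=0$. Letting $i$ range over $[n]$ yields $x\ge 0$, $y\ge 0$, and $x*y=0$. The implication $\mathrm{(ii)}\Rightarrow\mathrm{(iii)}$ is then immediate, since $\langle x,y\rangle=\sum_{i\in[n]}x_iy_i=\sum_{i\in[n]}(x*y)_i=0$, while the sign conditions carry over verbatim.

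Finally, for $\mathrm{(iii)}\Rightarrow\mathrm{(i)}$, note that $x\ge 0$ and $y\ge 0$ make each term $x_iy_i$ in the sum $\langle x,y\rangle=\sum_{i\in[n]}x_iy_i$ nonnegative; a sum of nonnegative reals equal to $0$ must have every summand equal to $0$, so $x_iy_i=0$ for all $i$. Together with $x_i,y_i\ge 0$ this gives $\min\{x_i,y_i\}=0$ for each $i$, i.e.\ $x\wedge y=0$. Since every step is elementary, there is no genuine obstacle; the only point requiring a moment's care is the first implication, where one must remember that ``$\min\{x_i,y_i\}=0$'' silently encodes the nonnegativity of both coordinates and is strictly stronger than ``$x_iy_i=0$''.
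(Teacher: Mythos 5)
Your proof is correct and complete; the paper itself states this proposition as a cited result from the literature and gives no proof, and your coordinatewise cycle $\mathrm{(i)}\Rightarrow\mathrm{(ii)}\Rightarrow\mathrm{(iii)}\Rightarrow\mathrm{(i)}$ is exactly the standard elementary argument one would supply. The point you flag --- that $\min\{x_i,y_i\}=0$ already encodes nonnegativity of both coordinates --- is indeed the only place where any care is needed.
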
 
\begin{definition}\cite{LCP}
	We say a matrix $C\in\mathbb{R}^{n\times n}$ is a $M$-matrix if its  off-diagonal elements are nonpositive and it has a nonnegative inverse.
\end{definition}
Also if $C$ is $M$-matrix, then  $x>0$ implies $C^{-1}x>0.$
\begin{lemma}[{Lemma 1}, \cite{pk}]\label{l1}   Let ${\bf C}=(C_0,C_1,...,C_k) \in \Lambda^{(k+1)}_{n\times n}$ and $\x =(x_{0},x_{1},...,x_{k})\in\text{\rm SOL}({\bf C},\d,q)$. Then $\x$ satisfies the following system $$C_0x_{0}=q+\sum_{i=1}^{k} C_ix_{i}~\text{and}~x_{0}\wedge x_{j}=0~\forall j\in [k].$$  
\end{lemma}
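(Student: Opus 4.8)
The statement to be proved is Lemma~\ref{l1}. Observe first that the equation $C_0x_0 = q + \sum_{i=1}^{k} C_i x_i$ is literally one of the defining conditions of $\text{SOL}(\mathbf{C},\d,q)$, so it transfers to $\x$ for free; the entire content of the lemma is the claim that a solution satisfies the ``synchronized'' complementarity relations $x_0 \wedge x_j = 0$ for \emph{every} $j \in [k]$, not merely $x_0 \wedge x_1 = 0$ together with the chained conditions $(d_j - x_j)\wedge x_{j+1} = 0$. The plan is to prove this by induction on $j$, converting each pointwise-minimum condition into the equivalent sign-plus-Hadamard-product form via Proposition~\ref{msd}.

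For the base case $j = 1$, the relation $x_0 \wedge x_1 = 0$ is part of the definition of a solution, so there is nothing to do. For the inductive step, suppose $x_0 \wedge x_j = 0$ for some $j \in [k-1]$; by Proposition~\ref{msd} this means $x_0, x_j \geq 0$ and $x_0 * x_j = 0$. From the defining condition $(d_j - x_j)\wedge x_{j+1} = 0$ we likewise get $d_j - x_j \geq 0$, $x_{j+1} \geq 0$, and $(d_j - x_j)*x_{j+1} = 0$. I would then argue componentwise: fix an index $i \in [n]$. If $(x_0)_i = 0$ then $(x_0)_i (x_{j+1})_i = 0$ trivially. If instead $(x_0)_i > 0$, then $x_0 * x_j = 0$ forces $(x_j)_i = 0$, hence $(d_j - x_j)_i = (d_j)_i$, which is strictly positive because $d_j$ is by hypothesis a positive vector; since $(d_j - x_j)_i (x_{j+1})_i = 0$, this yields $(x_{j+1})_i = 0$ and again $(x_0)_i(x_{j+1})_i = 0$. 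Thus $x_0 * x_{j+1} = 0$, and since $x_0, x_{j+1} \geq 0$, Proposition~\ref{msd} gives $x_0 \wedge x_{j+1} = 0$, completing the induction.

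The argument is short, and the only place where something genuinely has to be used is the strict positivity of the vectors $d_j$ (that $\d \subseteq \Rr_{++}$): this is exactly what lets us pass from $(x_j)_i = 0$ to $(d_j - x_j)_i > 0$ and thereby propagate the zero pattern of $x_0$ down the chain. So I do not expect a real obstacle; the main point to get right is simply organizing the induction and being careful that the chained conditions $(d_j - x_j)\wedge x_{j+1}=0$ are only indexed by $j \in [k-1]$, so the induction produces $x_0 \wedge x_j = 0$ for $j$ up to $k$ and no further, which is precisely the range claimed.
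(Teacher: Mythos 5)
Your proof is correct. The paper itself gives no proof of this lemma --- it is quoted from reference \cite{pk} --- and your argument (induction on $j$, using Proposition~\ref{msd} to pass to the Hadamard-product form and the strict positivity of $d_j$ to propagate the zero pattern of $x_0$ from $x_j$ to $x_{j+1}$) is exactly the standard argument that the citation delegates to; the one hypothesis you correctly identify as essential, $\d\in\Lambda^{(k-1)}_{n,++}$, is indeed where the whole lemma lives.
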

\begin{definition}[\cite{PP0}]\rm
	Let ${\bf C}=(C_0,C_1,...,C_k)\in\Lambda^{(k+1)}_{n\times n}$. Then a matrix $R\in\R$ is a {\it column representative} of ${\bf C}$ if $$R._j\in\big\{(C_0)._j,(C_1)._j,...,(C_k)._j\big\},~\forall j\in[n],$$
	where $R._j$ is the $j^{{\rm th}}$ column of matrix $R.$
\end{definition}
\begin{rem}\rm\label{Represent}
	From the above definition, we observe that each column representative is of the form \(C_0 I_0 + C_1 I_1 + \cdots + C_k I_k\), where the entries of \(I_i~\forall i\in\{0,1,2,...,k\}\) are only \(0\) or \(1\), satisfying \(I_i I_j = 0\) for all \(0 \leq i < j \leq k\) and \(\text{diag}(I_0 + I_1 + \cdots + I_k) \neq 0\). This characterization helps in understanding the structure of column representative matrices.
\end{rem}
Next, we recall the column $W$-property.
\begin{definition}[\cite{PP0}]\label{cw0} \rm
	Let  ${\bf C}:=(C_0,C_1,...,C_k)\in\Lambda^{(k+1)}_{n\times n}$. Then we say that ${\bf C}$ has the 
	\begin{itemize}
		\item[\rm (i)] {\it column $W$-property} if the determinants of all the column representative matrices of ${\bf C}$ are all positive or all negative.
		\item[\rm (ii)] {\it column $W_0$-property} if the determinants of all the column representative matrices of ${\bf C}$ are all nonnegative (nonpositive), and at least one column representative matrix has a positive(negative) determinant.
		
	\end{itemize}  
\end{definition}
Due to Sznajder and Gowda \cite{PP0}, we have the following results.
\begin{theorem}\rm[\cite{PP0}] \label{P1}
	For ${\bf C}=(C_0,C_1,...,C_k)\in\Lambda^{(k+1)}_{n\times n}$, the following are equivalent:\begin{itemize}
		
		\item[\rm(i)]${\bf C}$ has the column $W$-property.
		\item[\rm(ii)] For arbitrary nonnegative diagonal matrices $D_{0},D_{1},...,D_{k}\in\R$
		with $\text{\rm diag}(D_{0}+D_{1}+...+D_{k})>0$,
		$$\text{\rm det}\big(C_0D_{0}+C_1D_{1}+...+C_kD_{k}\big)\neq 0.$$
		\item [\rm(iii)]$C_0$ is invertible and  $(I,C_0^{-1}C_1,...,C_0^{-1}C_k)$ has the column $W$-property.
		\item [\rm(iv)]  For all $q\in\Rr$ and $\d\in\Lambda^{(k-1)}_{n,++}$, {\rm EHLCP}$({\bf C},\d,q)$ has a unique solution. \end{itemize}
\end{theorem}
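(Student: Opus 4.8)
The plan is to prove $(i)\Leftrightarrow(ii)\Leftrightarrow(iii)$ by linear algebra, then $(iii)\Rightarrow(iv)$ by treating uniqueness (via complementarity and $(ii)$) and existence (via topological degree) separately, and finally $(iv)\Rightarrow(i)$ by contraposition. For $(i)\Leftrightarrow(ii)$: write $M=C_0D_0+\dots+C_kD_k$ and $\lambda_{ij}=(D_i)_{jj}\ge 0$, so the hypothesis says $\sum_i\lambda_{ij}>0$ for each $j$; since the $j$-th column of $M$ is $\sum_i\lambda_{ij}(C_i)_{\cdot j}$, multilinearity of the determinant in its columns gives
\[
\det M=\sum_{\sigma\colon[n]\to\{0,\dots,k\}}\Big(\prod_{j=1}^n\lambda_{\sigma(j),j}\Big)\,\det R_\sigma ,
\]
where $R_\sigma$ is the column representative with $j$-th column $(C_{\sigma(j)})_{\cdot j}$. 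The coefficients are nonnegative and sum to $\prod_j\big(\sum_i\lambda_{ij}\big)>0$, so at least one is positive; hence if all $\det R_\sigma$ share a sign so does $\det M$, giving $(i)\Rightarrow(ii)$. Conversely, taking each $D_i$ to be $\{0,1\}$-valued with complementary supports recovers the column representatives, so $(ii)$ makes every $\det R_\sigma$ nonzero; and since the set of admissible tuples $(D_0,\dots,D_k)$ is connected while $(D_i)\mapsto\det M$ is continuous and (by $(ii)$) nowhere zero on it, $\det M$ keeps a constant sign, giving $(i)$. For $(ii)\Leftrightarrow(iii)$: taking $D_0=I$, $D_1=\dots=D_k=0$ in $(ii)$ forces $\det C_0\ne0$, and then $C_0^{-1}M=I\,D_0+(C_0^{-1}C_1)D_1+\dots+(C_0^{-1}C_k)D_k$ has nonzero determinant exactly when $\det M\ne0$; so ${\bf C}$ satisfies $(ii)$ iff $(I,C_0^{-1}C_1,\dots,C_0^{-1}C_k)$ does, i.e.\ (by the equivalence just proved) iff $(iii)$ holds.

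For $(iii)\Rightarrow(iv)$, using $(iii)$ and the evident equivalence of {\rm EHLCP}$({\bf C},\d,q)$ with {\rm EHLCP}$\big((I,C_0^{-1}C_1,\dots,C_0^{-1}C_k),\d,C_0^{-1}q\big)$ we may assume $C_0=I$. \emph{Uniqueness.} Let $\x=(x_0,\dots,x_k)$ and $\x'=(x_0',\dots,x_k')$ be solutions and set $w_i=x_i-x_i'$, so that $w_0=\sum_{i=1}^kC_iw_i$. By Lemma~\ref{l1} both solutions satisfy $x_0\wedge x_j=0$ and $x_0'\wedge x_j'=0$ for every $j\in[k]$, and a short coordinatewise case check (via Proposition~\ref{msd}) yields $w_0*w_j\le 0$ for all $j\in[k]$. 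Now define, coordinate by coordinate, nonnegative diagonal matrices $D_0,\dots,D_k$ and a vector $u$: if $(w_0)_m\ge0$ put $u_m=1$, $(D_0)_{mm}=(w_0)_m$, $(D_i)_{mm}=-(w_i)_m$ for $i\ge1$; if $(w_0)_m<0$ put $u_m=-1$, $(D_0)_{mm}=-(w_0)_m$, $(D_i)_{mm}=(w_i)_m$; and on coordinates with $(w_0)_m=\dots=(w_k)_m=0$ override by $u_m=0$, $(D_0)_{mm}=1$. Then $\text{\rm diag}(D_0+\dots+D_k)>0$ and by construction $\big(C_0D_0+\dots+C_kD_k\big)u=w_0-\sum_{i\ge1}C_iw_i=0$; by $(ii)$ this matrix is nonsingular, so $u=0$, which forces $w=0$ and hence $\x=\x'$.

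\emph{Existence.} Fix $\d$, regard $(x_1,\dots,x_k)$ as the unknown with $x_0:=q+\sum_{i=1}^kC_ix_i$, and consider the continuous piecewise-linear map
\[
\Phi_q(x_1,\dots,x_k)=\big(x_0\wedge x_1,\ (d_1-x_1)\wedge x_2,\ \dots,\ (d_{k-1}-x_{k-1})\wedge x_k\big),
\]
whose zeros are exactly the solutions of {\rm EHLCP}$({\bf C},\d,q)$ by Proposition~\ref{msd}. The crucial point is an a priori bound coming from $(i)$: the complementarity conditions force $0\le x_j\le d_j$ for $j\in[k-1]$, so only $x_k$ can run off; if $\|x_k\|\to\infty$ along a sequence of solutions then, normalizing $x_k$ and passing to a limit $u\ne0$, the chain conditions successively give $(x_j)_m=(d_j)_m$ for $j\in[k-1]$ and $(x_0)_m=0$ at every $m\in\operatorname{supp}(u)$, whence $u\ge0$, $C_ku\ge0$, $u*C_ku=0$; with $\alpha=\operatorname{supp}(u)$ this makes $(C_k)_{\alpha\alpha}$ singular, so the column representative of ${\bf C}$ equal to $C_k$ on $\alpha$ and to $I$ off $\alpha$ is singular, contradicting $(i)$. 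Hence all solutions of {\rm EHLCP}$({\bf C},\d,q(t))$ along the segment $q(t)=q+t\lambda\mathbf 1$, with $\lambda$ large enough that $q(1)>0$, stay in a fixed ball $B_r$, so $\deg(\Phi_{q(t)},B_r,0)$ is defined and independent of $t$; at $t=1$ the map $\Phi_{q(1)}$ equals the identity near its (by uniqueness) sole zero $(0,\dots,0)$, so this degree equals $1$. Thus $\deg(\Phi_q,B_r,0)=1\ne0$, {\rm EHLCP}$({\bf C},\d,q)$ is solvable, and by uniqueness it has exactly one solution, which is $(iv)$.

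Finally $(iv)\Rightarrow(i)$ is handled by contraposition. If $(i)$ fails then, by the equivalences above, $(ii)$ fails as well, so some $M=C_0D_0+\dots+C_kD_k$ with the stated structure is singular; after reducing to $C_0=I$ and (in the generic situation) to a singular column representative $R_\sigma$ with $R_\sigma v=0$, $v\ne0$, split $v$ along the partition $\alpha_i=\sigma^{-1}(i)$ into $v^{(0)},\dots,v^{(k)}$ (so $\sum_{i\ge1}C_iv^{(i)}=-v^{(0)}$) and build two distinct solutions of a suitable {\rm EHLCP}$({\bf C},\d,q)$ whose coordinatewise difference is $(-v^{(0)},v^{(1)},\dots,v^{(k)})$: at each coordinate $m$ place both solutions in the ``slot $\sigma(m)$ active'' configuration of the chain, taking the entries of $\d$ (resp.\ the size of the unbounded free coordinate) large enough to absorb the prescribed difference, and then set $q=x_0-\sum_{i\ge1}C_ix_i$; when instead $(i)$ fails through a mismatch of signs, analogous bookkeeping — tracked through the Brouwer degree of $\Phi_q$, which can then no longer be forced to equal $1$ — produces a $q$ with no solution. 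The two places that demand real care, and where I expect the main obstacle to lie, are the a priori bound in $(iii)\Rightarrow(iv)$ and this last construction: matching the prescribed differences to genuine chain configurations with an appropriate choice of $\d$, and disposing of the sign‑mismatch case.
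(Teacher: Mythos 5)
This theorem is quoted from Sznajder and Gowda \cite{PP0}; the paper itself gives no proof, so there is nothing internal to compare against, and your attempt must stand on its own. The chain $(i)\Leftrightarrow(ii)\Leftrightarrow(iii)$ is fine: the multilinear expansion of $\det(C_0D_0+\cdots+C_kD_k)$ over column representatives, the connectedness of the admissible set of diagonal tuples, and the reduction via $\det C_0\neq 0$ are all correct and standard. The problems are in $(iii)\Rightarrow(iv)$ and $(iv)\Rightarrow(i)$.

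In the uniqueness half of $(iii)\Rightarrow(iv)$ there is a concrete gap. For your diagonal matrices to be nonnegative at a coordinate $m$ with a single scalar $u_m$, you need all of $(w_0)_m,-(w_1)_m,\dots,-(w_k)_m$ to lie on one side of $0$. The inequalities $w_0*w_j\le 0$ give this only where $(w_0)_m\neq 0$; they do not exclude $(w_0)_m=0$, $(w_1)_m>0$, $(w_2)_m<0$, for which no choice of $u_m$ makes both $(D_1)_{mm}$ and $(D_2)_{mm}$ nonnegative. You must additionally establish $w_i*w_j\ge 0$ for all $1\le i<j\le k$, and this cannot come from Lemma \ref{l1} alone: it requires the full chain conditions $(d_j-x_j)\wedge x_{j+1}=0$, exactly the argument the paper carries out to obtain Eq.~(\ref{wprop}) in the proof of Theorem \ref{convex}. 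Without that step the homotopy/degree part of your existence argument is in reasonable shape (the a priori bound via the singular principal submatrix of $C_k$ is a legitimate, if terse, argument), but the uniqueness claim it leans on is unproved. Separately, $(iv)\Rightarrow(i)$ is only a sketch, as you yourself concede: the reduction to $C_0=I$ is unavailable there because when $(i)$ fails $C_0$ may be singular, and the construction of either two distinct solutions or an insolvable $q$ from a degenerate column representative is the combinatorial core of the Sznajder--Gowda proof; ``analogous bookkeeping tracked through the Brouwer degree'' does not supply it. As written, the proposal proves $(i)\Leftrightarrow(ii)\Leftrightarrow(iii)$ but not the equivalence with $(iv)$.
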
 
In next we recall the definition of column ND-$W$ property from \cite{cnd}.
\begin{definition}\rm
	We say that  ${\bf C}=(C_0,C_1,...,C_k)\in\Lambda^{(k+1)}_{n\times n}$ has the {\it column~nondegenerate} -$W$(column ND-$W$) property if 
	\begin{equation*}
		\begin{aligned}
			\left. \begin{array}{r} C_0x_{0}=\displaystyle{\sum_{i=1}^{k} C_ix_{i}},
				\\x_i*x_j=0~\forall 0\leq i<j\leq k \end{array}\right\} \implies (x_0,x_1,...,{x_k})={\bf 0}. 
		\end{aligned}
	\end{equation*}
\end{definition}
For the finiteness of solution set of EHLCP, we have the following result.
\begin{theorem}[\cite{PP0}]\rm\label{cnd}
	Let ${\bf C}=(C_0,C_1,...,C_k)\in\Lambda^{(k+1)}_{n\times n}$.  Then the following statements are equivalent.
	\begin{itemize}
		\item [\rm(i)] ${\bf C}$ has the column ND-$W$  property.
		\item [\rm(ii)] {\rm EHLCP}$({\bf C},{\bf d},q)$ has only finitely many solutions for every ${\bf d}\in \Lambda^{(k-1)}_{n,++}$ and $q\in\Rr$.
	\end{itemize}
\end{theorem}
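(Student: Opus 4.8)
The plan is to derive both implications from a reformulation of the column ND-$W$ property in terms of column representatives, combined with a polyhedral (``cell'') decomposition of the solution set. \emph{Step 1 (reformulation).} I would first show that $\mathbf{C}$ has the column ND-$W$ property if and only if every column representative of $\mathbf{C}$ is nonsingular. Indeed, if $C_0 x_0 = \sum_{i=1}^{k} C_i x_i$ and $x_i * x_j = 0$ for all $0 \le i < j \le k$, then at each coordinate $\ell \in [n]$ at most one of $(x_0)_\ell,\dots,(x_k)_\ell$ is nonzero; letting $b(\ell)$ be the index of that entry (or $b(\ell)=0$ if all are zero) and setting $z_\ell := (x_{b(\ell)})_\ell$, the identity becomes $R' z = 0$, where the $\ell$-th column of $R'$ equals $(C_0)_{\cdot\ell}$ if $b(\ell)=0$ and $-(C_{b(\ell)})_{\cdot\ell}$ if $b(\ell)\ge1$, so $R'$ agrees with a column representative $R$ of $\mathbf{C}$ up to the signs of some columns and $\det R' = \pm\det R$. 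Running this correspondence both ways shows that a nonzero $\mathbf{x}$ with the stated properties exists precisely when some column representative of $\mathbf{C}$ is singular; this step is short.

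\emph{Step 2 (contrapositive of (ii)$\Rightarrow$(i)).} Suppose some column representative of $\mathbf{C}$ is singular, witnessed by $R'z=0$ with $z\neq0$, where $R'$ is built by choosing for each $\ell$ column $\ell$ (up to sign) from $C_{b(\ell)}$ as in Step 1. Take $\mathbf{d}$ with all entries $1$, and consider the set $P_b$ of all $\mathbf{x}=(x_0,\dots,x_k)$ such that, at every coordinate $\ell$: $(x_j)_\ell = 0$ for $j > b(\ell)$; $(x_j)_\ell = (d_j)_\ell$ for $1 \le j < b(\ell)$; $(x_0)_\ell = 0$ whenever $b(\ell)\ge1$; and $(x_{b(\ell)})_\ell \in [0,u_\ell]$ with $u_\ell = 1$ if $1\le b(\ell)\le k-1$ and $u_\ell = +\infty$ otherwise. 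A direct check of $x_0\wedge x_1 = 0$ and $(d_j - x_j)\wedge x_{j+1}=0$ shows every point of $P_b$ satisfies the complementarity conditions, and substituting the fixed coordinates into $C_0 x_0 = q + \sum_{i\ge1}C_i x_i$ leaves exactly $R' z = q''$ with $z=\big((x_{b(\ell)})_\ell\big)_\ell$. Choosing $z_0$ with $0 < (z_0)_\ell < u_\ell$ for every $\ell$ and then the unique $q$ for which $z_0 \in P_b$, the points $z_0 + \varepsilon z$ remain in the relative interior of $P_b$ for all small $\varepsilon$, giving infinitely many solutions of $\mathrm{EHLCP}(\mathbf{C},\mathbf{d},q)$; hence (ii) fails.

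\emph{Step 3 ((i)$\Rightarrow$(ii)).} I would prove that $\mathrm{SOL}(\mathbf{C},\mathbf{d},q) = \bigcup_b P_b$, the union over all maps $b:[n]\to\{0,1,\dots,k\}$, where $P_b$ is defined as in Step 2 but with the given $\mathbf{d}$ and $q''$ determined by $q$. The inclusion $\bigcup_b P_b \subseteq \mathrm{SOL}$ is the verification already done. For the reverse inclusion, fix $\mathbf{x}\in\mathrm{SOL}$ and $\ell\in[n]$, and let $c$ be the largest index with $(x_c)_\ell > 0$ (or $c=0$ if none): since $\mathbf{d}\in\Lambda^{(k-1)}_{n,++}$, the relations $(d_{j-1}-x_{j-1})_\ell(x_j)_\ell = 0$ force $(x_j)_\ell = (d_j)_\ell$ for $1\le j<c$ and $(x_0)_\ell = 0$ when $c\ge1$, while $(x_j)_\ell = 0$ for $j>c$ by the choice of $c$, so $\mathbf{x}\in P_b$ with $b(\ell):=c$. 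Finally, within any fixed $P_b$ the equation reduces to $R'z = q''$ with $R'$ a (signed) column representative, invertible by (i) and Step 1, so $P_b$ contains at most one point; therefore $\mathrm{SOL}(\mathbf{C},\mathbf{d},q)$ is finite, with at most $(k+1)^n$ points.

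The main obstacle is the structural claim in Step 3 --- that along each coordinate every solution has the shape ``zeros, then the constant value $(d_j)_\ell$ on a run of indices, then a single free entry, then zeros'' --- since this is exactly what forces the free coordinates to be governed by an honest column representative and makes the reformulation of Step 1 applicable; the reformulation lemma and the perturbation argument in Step 2 are routine. This polyhedral decomposition of $\mathrm{SOL}(\mathbf{C},\mathbf{d},q)$ can also be quoted directly from \cite{PP0} if a more streamlined exposition is preferred.
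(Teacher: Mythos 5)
Your proof is correct, but note that the paper itself gives no proof of this statement: it is imported verbatim from Sznajder--Gowda \cite{PP0}, so there is nothing internal to compare against. What you have reconstructed is essentially the standard argument from that reference: the cell decomposition of $\mathrm{SOL}({\bf C},\d,q)$ indexed by maps $b:[n]\to\{0,1,\dots,k\}$, with each cell's equality constraint governed by a signed column representative, so that nonsingularity of all representatives gives at most one solution per cell (hence at most $(k+1)^n$ solutions), while a singular representative yields a line segment of solutions for a suitably chosen $q$. Your Step 1 --- the equivalence of the column ND-$W$ property with nonsingularity of every column representative --- is not something you needed to reprove from scratch: it is exactly Theorem \ref{t1} of the paper, established in Section 4 by the same sign-splitting construction ($x_0=-E_0v$, $x_i=E_iv$) that you sketch. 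Two small points of hygiene: in Step 3 your $P_b$ as literally defined encodes only the complementarity pattern, so the correct statement is $\mathrm{SOL}\subseteq\bigcup_b P_b$ together with ``each $P_b$ contains at most one \emph{solution}'' (rather than at most one point); and in the chain argument one should note explicitly that $(x_c)_\ell>0$ for $c\ge 2$ propagates down to $(x_1)_\ell=(d_1)_\ell>0$ and hence $(x_0)_\ell=0$, which is what rules out a coordinate being positive in both $x_0$ and some $x_j$, $j\ge 2$. Neither affects the validity of the argument.
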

From theorems \ref{P1} and \ref{cnd}, we have a direct implication.
\begin{corollary}\label{cor1}\rm
	Let ${\bf C}=(C_0,C_1,...,C_k)\in\Lambda^{(k+1)}_{n\times n}$. If ${\bf C}$ has the column $W$-property, then ${\bf C}$ has the column ND-$W$  property.
\end{corollary}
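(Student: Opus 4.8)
The plan is to read this off immediately from the two quoted theorems, exactly as the sentence preceding the statement anticipates. Specifically, I would chain the implication (i)$\Rightarrow$(iv) of Theorem~\ref{P1} with the implication (ii)$\Rightarrow$(i) of Theorem~\ref{cnd}.

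First, assuming $\mathbf{C}$ has the column $W$-property, Theorem~\ref{P1} gives that for every $q\in\Rr$ and every $\d\in\Lambda^{(k-1)}_{n,++}$ the problem $\mathrm{EHLCP}(\mathbf{C},\d,q)$ has a unique solution; in particular $\mathrm{SOL}(\mathbf{C},\d,q)$ is a nonempty singleton and hence a finite set, for all such $q$ and $\d$. This is exactly condition (ii) of Theorem~\ref{cnd}, so the equivalence (i)$\Leftrightarrow$(ii) there yields that $\mathbf{C}$ has the column ND-$W$ property, which completes the argument.

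Alternatively, one can give a self-contained proof straight from Theorem~\ref{P1}(ii). Suppose $C_0x_0=\sum_{i=1}^k C_ix_i$ with $x_i*x_j=0$ for all $0\le i<j\le k$. Then in each coordinate $\ell\in[n]$ at most one of $(x_0)_\ell,\dots,(x_k)_\ell$ is nonzero; pick $\sigma(\ell)\in\{0,\dots,k\}$ realizing it (say $\sigma(\ell)=0$ when all vanish) and let $D_i$ be the $0$--$1$ diagonal matrix with $(D_i)_{\ell\ell}=1$ iff $\sigma(\ell)=i$. Then $D_0+\cdots+D_k=I$, $D_iD_j=0$ for $i\neq j$, and $x_i=D_iz$ for the vector $z$ with $z_\ell=(x_{\sigma(\ell)})_\ell$. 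Substituting into the equation gives $\big(C_0D_0-C_1D_1-\cdots-C_kD_k\big)z=0$, and the matrix in parentheses equals a genuine column representative $R$ of $\mathbf{C}$ with some columns multiplied by $-1$, so its determinant is $\pm\det R\neq 0$ by the column $W$-property. Hence $z=0$ and $x_i=D_iz=0$ for all $i$, which is the column ND-$W$ property.

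Since the statement is labelled a corollary, there is essentially no obstacle; the only care needed is to note that ``unique solution'' in Theorem~\ref{P1}(iv) presupposes existence (so the solution set is a nonempty singleton, in particular finite) and that the finiteness in Theorem~\ref{cnd}(ii) must be known for \emph{all} $\d$ and $q$ simultaneously, which is precisely the form in which Theorem~\ref{P1}(iv) provides it.
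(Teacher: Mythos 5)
Your primary argument is exactly the paper's: the corollary is stated as a direct consequence of chaining Theorem~\ref{P1} (column $W$ implies a unique, hence finite, solution set for all $q$ and $\d$) with Theorem~\ref{cnd} (finiteness for all $q$ and $\d$ is equivalent to the column ND-$W$ property), and your reading of ``unique'' as a nonempty singleton is the intended one. Your self-contained alternative via column representatives is also sound and essentially reproduces the later Theorem~\ref{t1} combined with the fact that column $W$ forces all column representative determinants to be nonzero, but it is not needed here.
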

\section{Column Sufficient-W Property}
In this section, we define the {\it column sufficient-$W$} property as a generalization of the column-sufficient matrix property. We then show that the column sufficient-$W$ property ensures the convexity of the solution set of the EHLCP.

\begin{definition}\label{csww}\rm
	We say ${\bf C}=(C_0,C_1,...,C_k)\in\Lambda^{(k+1)}_{n\times n}$ has column sufficient-W(cS-$W$) property if 
	\begin{equation*}\label{csw}
		\begin{aligned} \left. \begin{array}{r}x_i*x_j\geq 0~\forall1\leq i<j\leq k,\\ C_0x_{0}=\sum_{i=1}^{k} C_ix_{i},  x_0*x_i\leq 0~\forall i\in[k]\end{array}\right\} \Rightarrow x_i*x_{i+1}=0~\forall 0\leq i\leq k-1. \end{aligned}
	\end{equation*}
\end{definition}

\noindent For \( i = 1 \), we observe that the cS-$W$ property becomes the X-column-sufficiency property for a pair of square matrices as described in \cite{elcp}. Additionally, for \( i = 1 \) and \( C_0 = I \), it reduces to the column-sufficient matrix property.

Next we prove a relation between the cS-$W$ property and X-column-sufficiency property.
\begin{proposition}
	Let ${\bf C}$ has the {\rm cS}-$W$ property. Then, 
	the pair $(C_i,C_{i+1})$ has the X-column-sufficiency property for all $0\leq i\leq k-1.$
\end{proposition}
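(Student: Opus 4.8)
The plan is to fix an index $i$ with $0\le i\le k-1$ and show that the pair $(C_i,C_{i+1})$ satisfies the X-column-sufficiency property, i.e., that
\begin{equation*}
u*v\le 0,\quad C_i u - C_{i+1} v = 0 \;\Longrightarrow\; (u,v)=(0,0).
\end{equation*}
Actually, examining the definition in \cite{elcp} more carefully, the X-column-sufficiency property for a pair $(A,B)$ reads $x_0*x_1\le 0,\ Ax_0-Bx_1=0 \implies (x_0,x_1)=(0,0)$; so given such a pair $(u,v)$ with $u*v\le 0$ and $C_iu-C_{i+1}v=0$, I must produce a contradiction unless $u=v=0$. First I would observe that I cannot hope to derive $(u,v)=(\mathbf 0,\mathbf 0)$ directly from cS-$W$ alone, since cS-$W$ only yields the \emph{complementarity} conclusions $x_j*x_{j+1}=0$; so in fact the correct reading of the proposition must be that $(C_i,C_{i+1})$ inherits the \emph{X-column-sufficiency} in the sense of the implication ``$u*v\le 0,\ C_iu-C_{i+1}v=0 \implies u*v=0$'' (the genuine ``sufficiency'' analogue, cf.\ how the ordinary column-sufficient property is the right weakening of the $P$-property). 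I will work with that reading.

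The key construction is to embed a witness for the pair $(C_i,C_{i+1})$ into a witness for the full tuple $\mathbf C$. Given $u,v\in\mathbb R^n$ with $u*v\le 0$ and $C_i u - C_{i+1} v = 0$, define a vector $\x=(x_0,x_1,\dots,x_k)$ by setting $x_i := u$, $x_{i+1} := -v$ (or $x_{i+1}:=v$, chosen so the signs in the cS-$W$ hypotheses come out right), and all other coordinates $x_j := 0$. I then need to check the three antecedent conditions of the cS-$W$ definition: (a) $x_s*x_t\ge 0$ for all $1\le s<t\le k$ — this is automatic whenever at least one of $x_s,x_t$ is zero, and the only possibly-nonzero product among these is $x_i*x_{i+1}$ (when $i\ge 1$), which equals $\pm\,u*v$, so the sign choice must make this $\ge 0$; (b) $x_0*x_s\le 0$ for all $s\in[k]$ — again automatic unless $i=0$, in which case $x_0=u$, $x_1=\pm v$ and we need $u*v\le 0$ after the sign choice; (c) the balance equation $C_0x_0 = \sum_{s=1}^k C_s x_s$, which with our choice of $\x$ collapses to $0 = C_i u - C_{i+1}v$ when $i\ge 1$, precisely our hypothesis, and to $C_0 u = C_1(\pm v)$ when $i=0$, again our hypothesis. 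Once these are verified, cS-$W$ gives $x_s*x_{s+1}=0$ for all $0\le s\le k-1$; in particular the index $s=i$ gives $x_i*x_{i+1}=\pm\,u*v=0$, hence $u*v=0$, which is the X-column-sufficiency conclusion for $(C_i,C_{i+1})$.

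The one genuinely delicate point is the bookkeeping of signs and of the boundary cases $i=0$ and $i=k-1$, since the cS-$W$ definition treats the index $0$ asymmetrically (via the $x_0*x_s\le 0$ block) from the indices $1,\dots,k$ (via the $x_s*x_t\ge 0$ block). I expect the main obstacle to be choosing the substitution $x_{i+1}=\pm v$ consistently so that \emph{both} the ``$\ge 0$'' and the ``$\le 0$'' antecedents are satisfied simultaneously for every possible position of the pair within the tuple; concretely, when $1\le i$ one wants $x_i*x_{i+1}\ge 0$, forcing $x_{i+1}$ to have the sign making $x_i*x_{i+1}=(-1)\cdot(u*v)\ge 0$ i.e.\ $x_{i+1}=-v$; whereas when $i=0$ one wants $x_0*x_1\le 0$, i.e.\ $u*x_1\le 0$, which is satisfied by $x_1=v$ directly. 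These two cases are handled separately but each is a one-line check. After fixing $i$, everything else is the routine verification sketched above, and then one lets $i$ range over $\{0,1,\dots,k-1\}$ to finish.
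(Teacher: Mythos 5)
Your proposal is correct and follows essentially the same route as the paper: embed the pair into a witness for the full tuple as $(u,v,0,\dots,0)$ when $i=0$ and as $(0,\dots,0,u,-v,0,\dots,0)$ when $i\ge 1$ (the same sign flip the paper uses), then invoke the cS-$W$ implication at index $i$ to conclude $u*v=0$. Your chosen reading of X-column-sufficiency (conclusion $u*v=0$ rather than $(u,v)=(0,0)$) is also the one the paper's own proof actually establishes, notwithstanding the stronger-looking parenthetical in its introduction.
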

\begin{proof} We consider here two cases.
	
	{\it Case-I}: For $i=0,$ let $x_0,x_1\in\Rr$ such that  $$C_0x_0-C_1x_1=0\text{ and  }x_0*x_1\leq 0.$$Set $\x:=(x_0,x_1,0,...,0)\in\Lambda^{(k+1)}_{n}$. Since $x_i=0$  for $i\in\{2,3,...,k\}$, we have $x_i*x_j\geq 0~\forall1\leq i<j\leq k$ and  
	$$ C_0x_{0}=\sum_{i=1}^{k} C_ix_{i},~\text{and } x_0*x_i\leq 0~\forall i\in[k].
	$$
	As ${\bf C}$ has the \cs property, $x_i*x_{i+1}=0$ for all $0\leq i\leq {k-1}.$ This implies $x_0*x_1=0.$ Thus, $(C_0,C_1)$ has the X-column-sufficiency property. 
	
	{\it Case-II}: For $i\in[k-1]$, let $x_i,x_{i+1}\in\Rr$ such that  $$C_ix_i-C_{i+1}x_{i+1}=0\text{ and }x_i*x_{i+1}\leq 0.$$ Consider ${\bf v}:=(v_0,...,v_i,v_{i+1},...,v_k)=(0,0,...,0,x_i,-x_{i+1},0,...,0)\in\Lambda^{(k+1)}_{n}$. Since $x_i*x_{i+1}\leq 0$,  $(x_i)_r$ and $(-x_{i+1})_r$ will have same sign whenever $(x_i)_r*(x_{i+1})_r\neq 0$. Thus, $v_s*v_j\geq 0~\forall1\leq s<j\leq k$ and $v_0*v_s\leq 0~\forall s\in[k]$. This gives $v_s*v_{s+1}=0=x_s*x_{s+1}$ for all $0\leq s\leq {k-1}$. Hence, $(C_i,C_{i+1})$ has the X-column-sufficiency property for all $i\in[k-1]$.
\end{proof}
In LCP \cite{LCP}, if a matrix is column-sufficient, then the corresponding solution set of LCP is a convex set. Similarly, this result holds in the HLCP whenever the involved matrix pair has the X-column-sufficiency property, see in \cite{elcp}. We now prove that the column sufficient-\(W\) property gives the convexity of the solution set to EHLCP.
\begin{theorem}\label{convex}
	Let ${\bf C}=(C_0,C_1,...,C_k)\in\Lambda^{(k+1)}_{n\times n}$. If  ${\bf C}$ has the {\rm cS}-$W$ property, then  $\text{\rm SOL}({\bf C},\d,q)$ is convex for every $q\in\mathbb{R}^n$ and $\d \in  \Lambda^{(k-1)}_{n,++}$.
\end{theorem}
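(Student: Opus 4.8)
The plan is to take two solutions $\x = (x_0, x_1, \ldots, x_k)$ and $\y = (y_0, y_1, \ldots, y_k)$ in $\text{SOL}({\bf C}, \d, q)$, form the convex combination $\z = \lambda \x + (1-\lambda)\y$ for $\lambda \in (0,1)$, and show $\z \in \text{SOL}({\bf C}, \d, q)$. The linear equation $C_0 z_0 = q + \sum_{i=1}^k C_i z_i$ is immediate by linearity, so the whole difficulty is the complementarity conditions. Using Proposition~\ref{msd}, each solution satisfies $x_j \geq 0$, $y_j \geq 0$ for all $j$, hence $z_j \geq 0$ for all $j$ by convexity; so it remains to prove the orthogonality conditions $\langle z_0, z_1\rangle = 0$ and $\langle d_j - z_j, z_{j+1}\rangle = 0$ for $j \in [k-1]$.

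The first step is to set up the differences. Let $u_i := x_i - y_i$ for $i \in \{0, 1, \ldots, k\}$. Subtracting the two linear equations gives $C_0 u_0 = \sum_{i=1}^k C_i u_i$. The next step is to extract sign information from the complementarity of $\x$ and $\y$. Since $x_0, x_1, y_0, y_1 \geq 0$ with $\langle x_0, x_1\rangle = \langle y_0, y_1\rangle = \langle x_0, y_1\rangle = \langle y_0, x_1 \rangle = 0$ --- the cross terms vanish because a nonnegative vector orthogonal in the first slot forces componentwise support disjointness --- one deduces, componentwise, that $u_0 * u_1 \leq 0$: wherever $(x_0)_r > 0$ we have $(x_1)_r = (y_1)_r = 0$ so $(u_1)_r = 0$, and symmetrically, so the product is nonpositive (in fact the support analysis shows it is $\leq 0$). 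Similarly, from $(d_j - x_j)\wedge x_{j+1} = 0$ and $(d_j - y_j)\wedge y_{j+1} = 0$ one gets $0 \leq x_j, y_j \leq d_j$ and the analogous orthogonality, yielding $(d_j - x_j) * x_{j+1} = 0$ etc.; writing things in terms of $u_j$, one should obtain $u_j * u_{j+1} \geq 0$ for $j \in [k-1]$ (the sign flips because $d_j - z_j$ decreases where $z_j$ increases). Combined with $u_0 * u_i \leq 0$ for all $i \in [k]$ --- which needs a telescoping/support argument chaining the pairwise relations, this is the delicate bookkeeping step --- the hypotheses of the cS-$W$ property are met by $(u_0, u_1, \ldots, u_k)$.

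Applying the cS-$W$ property then yields $u_i * u_{i+1} = 0$ for all $0 \leq i \leq k-1$, i.e. $x_i * x_{i+1} - x_i * y_{i+1} - y_i * x_{i+1} + y_i * y_{i+1}$ has zero sign-definite contribution in a way that, combined with the known vanishing of $x_i * x_{i+1}$-type terms and the established nonnegativity/nonpositivity of the cross terms, forces the cross terms $x_i * y_{i+1}$ and $y_i * x_{i+1}$ to vanish as well. Once all four products in each relevant pair vanish, expanding $z_i * z_{i+1} = \lambda^2 (x_i * x_{i+1}) + \lambda(1-\lambda)(x_i * y_{i+1} + y_i * x_{i+1}) + (1-\lambda)^2 (y_i * y_{i+1})$ shows $z_0 * z_1 = 0$ and, by the same expansion applied to $d_j - z_j = \lambda(d_j - x_j) + (1-\lambda)(d_j - y_j)$ against $z_{j+1}$, that $(d_j - z_j) * z_{j+1} = 0$. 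Together with $z_j \geq 0$ and $z_j \leq d_j$, Proposition~\ref{msd} gives $z_0 \wedge z_1 = 0$ and $(d_j - z_j)\wedge z_{j+1} = 0$, so $\z$ is a solution.

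The main obstacle I anticipate is the careful support-based argument establishing that the differences $u_i$ satisfy exactly the sign pattern demanded by Definition~\ref{csww} --- in particular the mixed conditions $x_i * y_{i+1} = 0$ and $y_i * x_{i+1} = 0$ and the chained relation $u_0 * u_i \leq 0$ for all $i \in [k]$ rather than merely for adjacent indices. This requires repeatedly using that for a pair $a, b \geq 0$ with $\langle a, b\rangle = 0$ the supports are disjoint, and patching these facts across the "staircase" structure $x_0 \perp x_1$, $x_1 \leq d_1$, $x_2 \perp (d_1 - x_1)$, etc.; Lemma~\ref{l1} may help here by giving the cleaner complementarity form $x_0 \wedge x_j = 0$ for all $j$. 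Once the sign pattern is verified, the rest is a mechanical expansion of convex combinations, so the proof hinges entirely on that combinatorial sign analysis.
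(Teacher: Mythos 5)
Your overall strategy is the same as the paper's: form the differences $u_i=x_i-y_i$, verify that $(u_0,\dots,u_k)$ satisfies the hypotheses of the cS-$W$ property, conclude that the adjacent products vanish, split each vanishing product into two nonnegative cross terms that must each be zero, and expand the convex combination. Two steps, however, are not right as written. First, you assert at the outset that $\langle x_0,y_1\rangle=\langle y_0,x_1\rangle=0$ ``because a nonnegative vector orthogonal in the first slot forces componentwise support disjointness.'' That is false: $x_0\wedge x_1=0$ and $y_0\wedge y_1=0$ say nothing about the supports of $x_0$ versus $y_1$ (take $x_0=(1,0)$, $x_1=(0,1)$, $y_0=(0,1)$, $y_1=(1,0)$). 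The vanishing of these cross terms is precisely what the cS-$W$ property is needed to deliver (and you correctly re-derive it later from $u_0*u_1=0$), so this step is circular; fortunately it is also unnecessary, since $u_0*u_i=-x_0*y_i-y_0*x_i\le 0$ follows for every $i\in[k]$ directly from Lemma~\ref{l1} (which gives $x_0\wedge x_i=0=y_0\wedge y_i$ for all $i$) together with nonnegativity --- no telescoping argument is needed for this part, contrary to what you flag as the delicate step.

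Second, and more seriously, Definition~\ref{csww} requires $u_i*u_j\ge 0$ for \emph{all} pairs $1\le i<j\le k$, but you only argue (and only sketch) the adjacent case $u_j*u_{j+1}\ge 0$. For $k\ge 3$ the hypothesis of the cS-$W$ property is therefore not verified and the property cannot yet be invoked. The adjacent case does follow from the identity $u_j*u_{j+1}=(d_j-y_j)*x_{j+1}+(d_j-x_j)*y_{j+1}\ge 0$, but that identity does not extend to non-adjacent pairs. The paper closes this gap with a support-chaining argument: if $(u_i)_r>0$ for some coordinate $r$, then $(y_i)_r<(x_i)_r\le (d_i)_r$, so $(d_i-y_i)_r>0$ and the complementarity $(d_s-y_s)\wedge y_{s+1}=0$ forces $(y_{i+1})_r=0$; iterating gives $(y_{i+1})_r=\cdots=(y_j)_r=0$, hence $(u_j)_r=(x_j)_r\ge 0$, which rules out a strictly negative product. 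You need to supply this (or an equivalent) argument. The remainder of your outline --- applying cS-$W$, extracting $x_0*y_1=y_0*x_1=0$ and $(d_i-y_i)*x_{i+1}=(d_i-x_i)*y_{i+1}=0$ from the vanishing adjacent products, and expanding $z_i*z_{i+1}$ and $(d_j-z_j)*z_{j+1}$ --- matches the paper and is correct.
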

\begin{proof} Let $q\in\mathbb{R}^n$ and $\d \in  \Lambda^{(k-1)}_{n,++}$. If $\text{\rm SOL}({\bf C},\d,q)$ is empty or singleton, then the proof is complete. Suppose $\text{\rm SOL}({\bf C},\d,q)$ has at least two solutions; let $\x,\y\in \text{\rm SOL}({\bf C},\d,q)$, where $\x=(x_0,x_1,...,x_k)$ and $\y=(y_0,y_1,...,y_k).$ We now  claim that $t\x+(1-t)\y\in{\rm SOL}({\bf C},\d,q)~\forall 0\leq t\leq 1.$
	
	Since $\x,\y\in {\rm SOL}({\bf C},\d,q),$
	we have
	\begin{equation}\label{Cx}
		\begin{aligned}
			C_0 {{x}_{0}} =&q+\sum_{i=1}^{k} C_i {x}_{i},\\ 
			~~ {x}_{0} \wedge {x}_{1}=0 ~~\text{and}~~ (d_{j}-&{x}_{j})\wedge {x}_{j+1}=0 ~\forall j\in[k-1],\\
		\end{aligned}
	\end{equation}
	and
	\begin{equation}\label{y}
		\begin{aligned}
			C_0 y_{0}=&q+\sum_{i=1}^{k} C_iy_{i},\\
			y_{0}\wedge y_{1}=0 ~~\text{and} ~~ (d_{j}-&y_{j})\wedge y_{j+1}=0, ~\forall j\in[k-1].\\
		\end{aligned}
	\end{equation}   From  Lemma \ref{l1} and the equations above, we obtain 
	\begin{equation}\label{c1}
		C_0x_{0}=q+\sum_{i=1}^{k} C_ix_{i},~ x_0\wedge x_i=0~\forall i\in[k],
	\end{equation}
	and 
	\begin{equation}\label{c2}
		C_0y_{0}=q+\sum_{i=1}^{k} C_iy_{i},~ y_0\wedge y_i=0~\forall i\in[k].
	\end{equation}
	In view of Lemma \ref{msd}, from Eqs. (\ref{c1}) and (\ref{c2})
	\begin{equation}\label{c3}
		\begin{aligned}
			C_0(x_0-y_{0})=&\sum_{i=1}^{k} C_i(x_i-y_{i}),\\
			(x_0-y_0)*(x_i-y_i)&=-x_0*y_i-y_0*x_i\leq0~\forall i\in[k].
		\end{aligned}
	\end{equation}
	We now show  that $(x_i-y_i)*(x_j-y_j)\geq 0 ~\forall1\leq i<j\leq k$. Assume contrary. Then there exists $i^{*},j^{*}\in[k]$ with $i^{*}<j^{*}$ such that
	$$(x_{i^{*}}-y_{i^{*}})_r*(x_{j^{*}}-y_{j^{*}})_r< 0\text{ for some }r\in[n].$$ Hence $(x_{i^{*}}-y_{i^{*}})_r$ and $(x_{j^{*}}-y_{j^{*}})_r$ have opposite sign. Without loss of generality, we assume $(x_{i^{*}}-y_{i^{*}})_r>0$. Then $(x_{i^{*}})>(y_{i^{*}})_r$ which unifies $(d_{i^{*}})_r>(y_{i^{*}})_r$ due to Eqs \ref{Cx} and \ref{y} as $0\leq x_i,y_i\leq d_i~\forall i\in[k-1]$. From the complementarity condition, $(d_{i}-y_{i})_r*y_{i+1}=0~\forall i\in[k-1],$  $(y_{i+1})_r=(y_{i+2})_r=...=(y_{j})_r=0$. As $i^{*}<j^{*}$, $(x_{j^{*}}-y_{j^{*}})_r=(x_{j^{*}})_r>0$ which contradicts that $(x_{i^{*}}-y_{i^{*}})_r$ and $(x_{j^{*}}-y_{j^{*}})_r$ have opposite sign.
	Thus, \begin{equation}\label{wprop}
		(x_i-y_i)*(x_j-y_j)\geq 0 ~\forall1\leq i<j\leq k.\end{equation}
	Now, from Eqs.  (\ref{c3}) and (\ref{wprop}), we have
	\begin{equation*}
		\begin{aligned}
			&C_0(x_0-y_{0})=\sum_{i=1}^{k} C_i(x_i-y_{i}),\\
			(x_0-y_0)*(x_i-y_i)&\leq0~\forall i\in[k]\text{ and }
			(x_i-y_i)*(x_j-y_j)\geq 0 ~\forall1\leq i<j\leq k.
		\end{aligned}
	\end{equation*}As ${\bf C}$ has the \cs property, we get $(x_i-y_i)*(x_{i+1}-y_{i+1})=0~\forall 0\leq i\leq {k-1}.$
	From this, we have\begin{equation}\label{u0v1}
		(x_0-y_0)*(x_1-y_1)=-x_0*y_1-y_0*x_1=0\implies x_0*y_1=y_0*x_1=0,	\end{equation}
	and for $i\in[k-1]$, we have
	$	(x_i-y_i)*(x_{i+1}-y_{i+1})=0$  which implies that
	$$	((d_i-y_i)-(d_i-x_i))*(x_{i+1}-y_{i+1})=
	(d_i-y_i)*x_{i+1}+(d_i-x_i)*y_{i+1}=0.$$
	As $0\leq x_i,y_i\leq d_i~\forall i\in[k-1]$, both terms $(d_i-y_i)*x_{i+1}$, $(d_i-x_i)*y_{i+1}$  are nonnegative. So,\begin{equation}\label{di}
		(d_i-y_i)*x_{i+1}=(d_i-x_i)*y_{i+1}=0~\forall i\in[k-1].\end{equation}
	As $\x,\y\in{\rm SOL}({\bf C},\d,q),$ from Eqs. (\ref{c1}) and (\ref{c2}), we have \begin{equation}\label{convex00}
		C_0(tx_0+(1-t)y_{0})=q+\sum_{i=1}^{k} C_i(tx_i+(1-t)y_{i}),\\
	\end{equation}
	As, $t\x+(1-t)\y\geq {\bf 0}$, we need to show only the  complementarity conditions holds. From (\ref{u0v1}), $x_0*y_1=y_0*x_1=0$, we have \begin{equation}\label{convex11}
		(tx_0+(1-t)y_{0})*(tx_1+(1-t)y_{1})=t(1-t)x_0*y_1+t(1-t)y_0*x_1=0.\end{equation}
	For $i\in[k-1],$ from (\ref{di}), \begin{equation}\label{convex12}
		\begin{aligned}
			(d_i-(tx_i+(1-t)y_{i}))&*(tx_{i+1}+(1-t)y_{i+1})\\
			=&(t(d_i-x_i)+(1-t)(d_i-y_i))*(tx_{i+1}+(1-t)y_{i+1})\\
			=&t(1-t)((d_i-y_i)*x_{i+1}+(d_i-x_i)*y_{i+1})\\
			=&0.
		\end{aligned}
	\end{equation}
	Thus, from Eqs. (\ref{convex00}), (\ref{convex11}) and (\ref{convex12}), all the necessary complementarity conditions are satisfied, $t\x+(1-t)\y\in{\rm SOL}({\bf C},\d,q).$ Hence $\text{\rm SOL}({\bf C},\d,q)$ is convex for every $q\in\mathbb{R}^n$ and $\d \in  \Lambda^{(k-1)}_{n,++}$.\end{proof}

In LCP theory \cite{LCP}, if $C \in \mathbb{R}^{n\times n}$ is a column-sufficient matrix, then LCP($C, q$) has a unique solution for every $q > 0$. Next, we show a similar result for the cS-$W$ property. Although the proof is straightforward, we provide it here for sake of completeness.
\begin{theorem}
	Let ${\bf C}=(C_0,C_1,...,C_k)\in \Lambda^{(k+1)}_{n\times n}$ has the cS-$W$ property. If $C_0$ is a $M$-matrix, then for every $q \in \Rr_{++}$ and for every $\d \in  \Lambda^{(k-1)}_{n,++}$, $\text{\rm EHLCP}({\bf C},\d,q)$ has a unique solution.
\end{theorem}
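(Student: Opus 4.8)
The plan is to exhibit one explicit solution and then use the cS-$W$ property to force every solution to equal it. Since $C_0$ is an $M$-matrix it is invertible, and $q\in\Rr_{++}$ gives $C_0^{-1}q>0$; the first step is to check that $\bar{\mathbf x}:=(C_0^{-1}q,0,\dots,0)$ solves {\rm EHLCP}$({\bf C},\d,q)$. This is routine: $C_0(C_0^{-1}q)=q=q+\sum_{i=1}^k C_i\cdot 0$, while $\bar x_0>0$ and $\bar x_1=0$ give $\bar x_0\wedge\bar x_1=0$, and for $j\in[k-1]$ the vectors $d_j-\bar x_j=d_j>0$ and $\bar x_{j+1}=0$ give $(d_j-\bar x_j)\wedge\bar x_{j+1}=0$. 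Hence $\bar{\mathbf x}\in\text{\rm SOL}({\bf C},\d,q)$, which also settles existence.

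For uniqueness I would take an arbitrary $\mathbf y=(y_0,\dots,y_k)\in\text{\rm SOL}({\bf C},\d,q)$ and apply to the pair $(\bar{\mathbf x},\mathbf y)$ the same chain of deductions used in the proof of Theorem~\ref{convex}. By Lemmas~\ref{l1} and \ref{msd}, the differences satisfy $C_0(\bar x_0-y_0)=\sum_{i=1}^k C_i(\bar x_i-y_i)$ and $(\bar x_0-y_0)*(\bar x_i-y_i)=-\bar x_0*y_i-y_0*\bar x_i\le 0$ for $i\in[k]$, and $(\bar x_i-y_i)*(\bar x_j-y_j)=y_i*y_j\ge 0$ for $1\le i<j\le k$ --- the last being trivial here because $\bar x_i=0$ for $i\ge 1$, so the delicate sign argument behind Eq.~\eqref{wprop} is not even needed. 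The cS-$W$ property then yields $(\bar x_i-y_i)*(\bar x_{i+1}-y_{i+1})=0$ for all $0\le i\le k-1$, and the manipulations giving Eqs.~\eqref{u0v1} and \eqref{di} produce $\bar x_0*y_1=0$ together with $(d_i-\bar x_i)*y_{i+1}=d_i*y_{i+1}=0$ for $i\in[k-1]$.

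The last step is to collapse the components: $\bar x_0=C_0^{-1}q>0$ and $\bar x_0*y_1=0$ force $y_1=0$, and $d_i>0$ with $d_i*y_{i+1}=0$ force $y_{i+1}=0$, so $y_1=\dots=y_k=0$; then $C_0y_0=q+\sum_{i=1}^k C_iy_i=q$ and invertibility of $C_0$ give $y_0=C_0^{-1}q=\bar x_0$, hence $\mathbf y=\bar{\mathbf x}$ and the solution is unique. I do not expect a genuine obstacle: the strict positivity of $C_0^{-1}q$ (from the $M$-matrix hypothesis) and of the $d_i$ does all the work of killing the higher components, so the only points that need care are checking that $\bar{\mathbf x}$ really is a solution and that the hypotheses of the cS-$W$ implication are satisfied by $\bar{\mathbf x}-\mathbf y$.
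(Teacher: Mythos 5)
Your proof is correct and follows essentially the same route as the paper: exhibit $(C_0^{-1}q,0,\dots,0)$ as a solution, apply the cS-$W$ property to the difference with an arbitrary solution $\mathbf y$, use $C_0^{-1}q>0$ to force $y_1=0$, and use positivity of the $d_i$ to kill the remaining components. The only (immaterial) deviation is that you obtain $y_{i+1}=0$ from the cS-$W$ conclusion at every index via $d_i*y_{i+1}=0$, whereas the paper gets it by chaining the complementarity conditions $(d_j-y_j)\wedge y_{j+1}=0$ once $y_1=0$ is known.
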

\begin{proof}
	Let  $q \in \Rr_{++}$ and $\d=(d_{1}, d_{2},...,d_{k-1}) \in  \Lambda^{(k-1)}_{n,++}$. As $C_0$ is a $M$ matrix and $q \in \Rr_{++}$, we have $C_0^{-1}q> 0$. Let $\x=(C_0^{-1}q,0,...,0).$  By observation  $\x=(C_0^{-1}q,0,...,0)\in\text{SOL}({\bf C},\d,q).$ We clam that $\text{SOL}({\bf C},\d,q)=\{(C_0^{-1}q,0,...,0)\}.$
	
	Suppose ${\bf y}=(y_{0},y_{1},...,y_{k})\in \Lambda^{(k+1)}_n$ is an another solution to EHLCP(${\bf C},\d,q$). Then, 
	\begin{equation}\label{ssm}
		\begin{aligned}
			C_0 y_{0}=q+\sum_{i=1}^{k} C_iy_{i},~
			y_{0}\wedge y_{1}=0, ~ (d_{j}-y_{j})\wedge y_{j+1}=0~ \forall j\in[k-1].
		\end{aligned}
	\end{equation}
	From the Lemma \ref{l1}, we have
	\begin{equation}\label{unique}
		C_0y_{0}=q+\sum_{i=1}^{k} C_iy_{i}~\text{and}~y_{0}\wedge y_{j}=0~\forall~j\in [k].
	\end{equation}
	We let ${\bf{z}}:=\y-\x$,  then ${\bf{z}}=(y_{0}-C_0^{-1}q, y_{1},y_{2},...,y_{k})$. As $y_j\geq 0~\forall j\in[k]$, $z_i*z_j\geq 0~\forall1\leq i<j\leq k$. Also by a direct implication, from  (\ref{unique}), we get
	\begin{equation}\label{c0}
		C_0 (y_{0}-C_0^{-1}q)=\sum_{i=1}^k C_i y_{i}	\end{equation}
	and 
	$$ y_{j}\geq 0,~~(y_{0}-C_0^{-1}q)*y_{j}=y_{0}*y_{j}-C_0^{-1}q*y_{j}=-C_0^{-1}q*y_{j}\leq 0~\forall j\in [k].$$ As  $\bf C$ has the cS-$W$ property, we have $z_0*z_1 =0,$ this implies $$(y_{0}-C_0^{-1}q)*y_{1}=0\implies -C_0^{-1}q*y_{1}=0 $$ As $C_0^{-1}q>0, y_1=0.$ From complementarity property of solution $\y$, we have $(d_{j}-y_{j})\wedge y_{j+1}=0~ \forall j\in[k-1].$ As $y_1=0, (d_{1}-0)\wedge y_{2}=0\implies y_2=0.$ Similarly, $y_j=0~\forall j\in[k].$ 
	As $C_0$ is invertible, then from (\ref{c0}), $(y_{0}-C_0^{-1}q)=0.$  This gives ${\bf z}={\bf 0}.$ Hence ${\bf y}=\x.$ We have our conclusion.
\end{proof}
\section{Some relation between Column $W$ and cS-$W$ properties}
In LCP \cite{LCP}, we have a significant implication that follows:
$${\rm P~ matrix \iff Column\text{-}Sufficient~matrix~+~Nondegenerate ~matrix}.$$ It is interesting to observe a similar type of result for a set of matrices in the context of the EHLCP. To proceed further, we first prove a column representative result related to the  column ND-$W$ property. 

For a nondegenerate matrix, there is a nonzero principal minor result, which states that a matrix is nondegenerate if and only if all the principal minors of the given matrix are nonzero, see \cite{LCP}. Also, for a real matrix \( C \in \R \), all the principal minors are simply the determinants of the column representative matrices of \( (I, C) \). Motivated by this, we prove a similar result for the column ND-$W$ property.
\begin{theorem}\label{t1}Let ${\bf C}=(C_0,C_1,...,C_k)\in\Lambda^{(k+1)}_{n\times n}.$ Then, the following are equivalent.
	\begin{itemize}
		\item [\rm(i)] ${\bf C}$ has the column ND-$W$  property.
		\item[\rm(ii)] The determinant of every column representative matrix of ${\bf C}$ is nonzero.
	\end{itemize}
\end{theorem}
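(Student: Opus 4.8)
The plan is to prove both implications directly from the definitions, using Remark \ref{Represent} to pass between column representatives and the algebraic form $C_0 D_0 + C_1 D_1 + \cdots + C_k D_k$ with $D_i$ nonnegative diagonal, $D_i D_j = 0$ for $i \neq j$, and $\mathrm{diag}(D_0 + \cdots + D_k) > 0$.

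For the implication (ii) $\Rightarrow$ (i), I would argue by contraposition. Suppose $\mathbf{C}$ does not have the column ND-$W$ property, so there exists a nonzero $\x = (x_0, x_1, \ldots, x_k)$ with $C_0 x_0 = \sum_{i=1}^k C_i x_i$ and $x_i * x_j = 0$ for all $0 \le i < j \le k$. The condition $x_i * x_j = 0$ means that for each coordinate $r \in [n]$, at most one of $(x_0)_r, (x_1)_r, \ldots, (x_k)_r$ is nonzero. For each $r$, pick an index $\ell(r) \in \{0, 1, \ldots, k\}$: if some $(x_i)_r \neq 0$ take that $i$, otherwise take $\ell(r) = 0$ (or any fixed choice). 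Define diagonal $0$--$1$ matrices $I_i$ by $(I_i)_{rr} = 1$ iff $\ell(r) = i$. Then $I_i I_j = 0$ for $i \neq j$ and $I_0 + \cdots + I_k = I$, so $R := C_0 I_0 + C_1 I_1 + \cdots + C_k I_k$ is a column representative of $\mathbf{C}$. Now set $w := x_0 - x_1 - x_2 - \cdots - x_k$ (with appropriate signs to match the equation $C_0 x_0 - \sum_{i \ge 1} C_i x_i = 0$); by construction $R w = C_0 x_0 - \sum_{i=1}^k C_i x_i = 0$ because on the coordinates where $I_i$ acts, $w$ agrees with $\pm x_i$. Since $\x \neq \mathbf{0}$, $w \neq 0$, so $R$ is singular, i.e. $\det R = 0$, contradicting (ii). I need to be careful about the sign bookkeeping in $w$, but since each coordinate is controlled by exactly one $x_i$, this works coordinate-by-coordinate.

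For (i) $\Rightarrow$ (ii), again contraposition: suppose some column representative $R = C_0 I_0 + \cdots + C_k I_k$ has $\det R = 0$. Then there is a nonzero $w \in \Rr$ with $R w = 0$. Define $x_i := I_i w$ for $i = 1, \ldots, k$ and $x_0 := I_0 w$; actually to match the equation $C_0 x_0 = \sum_{i=1}^k C_i x_i$ I set $x_0 := I_0 w$ and $x_i := -I_i w$ for $i \ge 1$. Since the $I_i$ have disjoint supports summing to (a subset of, but by $\mathrm{diag}(I_0 + \cdots + I_k) \neq 0$ actually we need all of) $[n]$, we get $x_i * x_j = 0$ for $i \neq j$ automatically, and $C_0 x_0 - \sum_{i=1}^k C_i x_i = C_0 I_0 w + \sum_{i=1}^k C_i I_i w = R w = 0$. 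Since $w \neq 0$ and $I_0 + \cdots + I_k$ has all diagonal entries nonzero (from the column representative definition via Remark \ref{Represent}), at least one $x_i$ is nonzero, so $\x = (x_0, \ldots, x_k) \neq \mathbf{0}$ witnesses failure of the column ND-$W$ property.

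The main obstacle is purely bookkeeping: making sure the signs in the definition of $w$ (resp. $x_i$) are consistent so that $R w$ really equals $C_0 x_0 - \sum_{i=1}^k C_i x_i$, and confirming that the $0$--$1$ diagonal matrices built from the support pattern of $\x$ genuinely form a column representative (which needs $\mathrm{diag}(I_0 + \cdots + I_k) \neq 0$ — handled by filling unused coordinates with $I_0$). There is no analytic difficulty; the content is just the correspondence, already implicit in Remark \ref{Represent}, between the complementarity-type condition $x_i * x_j = 0$ and a choice of one matrix $C_{\ell(r)}$ per column. I would also remark afterward that this theorem, combined with Theorem \ref{cnd}, recovers the classical fact that a real matrix $C$ is nondegenerate iff all principal minors of $C$ are nonzero, by taking $\mathbf{C} = (I, C)$.
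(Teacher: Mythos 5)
Your proposal is correct and follows essentially the same route as the paper: both directions rest on the same correspondence between the support pattern of a complementary tuple $(x_0,\ldots,x_k)$ and a choice of $0$--$1$ diagonal matrices $I_0,\ldots,I_k$ forming a column representative, with a single vector $w$ (the paper's $v$, up to a global sign) carrying the kernel relation $Rw=0$. The only differences are cosmetic --- you argue by contraposition where the paper argues by contradiction, and you place the minus sign on $x_1,\ldots,x_k$ instead of on $x_0$.
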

\begin{proof} (i)$\implies$(ii): 
	Let \({\bf C}\) has the column ND-$W$ property. Assume the contrary. Suppose there exists a column representative matrix \( M \) of \({\bf C}\) such that \(\text{det}(M) = 0\). According to Remark \ref{Represent}, there exists a set of  $n^{th}$ order diagonal matrices \( I_0, I_1, \ldots, I_k \) with entries of \(0\) or \(1\) only, satisfying \( I_i I_j = 0 \) for all \( 0 \leq i < j \leq k \) and \(\text{diag}(I_0 + I_1 + \cdots + I_k) \neq 0\) which satisfies
	$$ M = C_0 I_0 + C_1 I_1 + \cdots + C_k I_k.$$ Since \(\text{det}(M) = 0\), there exists a nonzero vector \( y \in \Rr \) such that 
	$$(C_0 I_0 + C_1 I_1 + \cdots + C_k I_k) y = 0.$$
	Define \( x_0 = -I_0 y \) and \( x_i = I_i y \) for all \( i \in [k] \). Then, we have
	$$\begin{aligned}  \begin{array}{r} C_0x_{0}=\sum_{i=1}^{k} C_ix_{i},~x_i*x_j=0~\forall0\leq i<j\leq k. \end{array} \end{aligned}$$
	By the column ND-$W$ property of ${\bf C},~\x=(-I_0y,I_1y,...,I_ky)={\bf 0}.$ As $\diag(I_0+I_1+\cdots+I_k)\neq 0$ and $I_iI_j=0~\forall 0\leq i<j\leq k$, we get that $y=0.$ Thus, we get a contradiction. \\ (ii)$\implies$(i): Suppose there exists a nonzero vector $\x=(x_0,x_1,...,x_k)\in\Lambda^{(k+1)}_{n}$ satisfies 
	\begin{equation}\label{1.0}\begin{aligned}  \begin{array}{r} C_0x_{0}=\sum_{i=1}^{k} C_ix_{i},~x_i*x_j=0~\forall0\leq i<j\leq k. \end{array} \end{aligned}\end{equation} 
	Let  $v\in\Rr$ be a vector whose $j^{th}$ component is given as $$v_j=\begin{cases}
		-(x_{0})_j&\text{ if }~ (x_{0})_j\neq 0\\  (x_{i})_j &~\text{ if } (x_{0})_j=0 ~\text{and} ~(x_{i})_j\neq 0~\text{for some}~i\in[k]\\
		0&\text{ if }~ (x_{0})_j=0 ~\text{and} ~(x_{i})_j= 0 ~\text{for all}~i\in[k]
	\end{cases},$$
	and  let $\{E_i\}^{k}_{i=1}$ be the  set of  $n^{th}$ order diagonal matrices,  defined by $$(E_0)_{jj}=\begin{cases}
		1&\text{ if }	(x_0)_j\neq0\\
		0& \text{ if }~ (x_{0})_j=0 ~\text{and} ~(x_{i})_j\neq 0 ~\text{for some}~i\in[k]\\
		1&\text{ if }~ (x_{0})_j=0 ~\text{and} ~(x_{i})_j= 0 ~\text{for all}~i\in[k]
	\end{cases},$$ and for $i\in[k],$ $$(E_{i})_{jj}=\begin{cases}
		0&\text{if } (x_{i})_j=0\\ 1&\text{ if } (x_{i})_j\neq 0
	\end{cases}.$$ By an easy verification, $v\neq 0$, $x_0=-E_0v$ and $x_i=E_iv~\forall i\in[k].$ Also, from Remark \ref{Represent},  $C_0E_0+C_1E_1+\cdots+C_kE_k$ is a column representative matrix of ${\bf C}$. Thus, from (\ref{1.0}), we have $$(C_0E_0+C_1E_1+\cdot\cdot\cdot+C_kE_k)v=0.$$ This implies that det$(C_0E_0+C_1E_1+\cdot\cdot\cdot+C_kE_k)=0.$ This gives us a contradiction.  Hence ${\bf C}$ has the column ND-$W$  property. \end{proof}
Now, we present our main result ot this section.
\begin{theorem}\label{4.20}
	Let ${\bf C}=(C_0,C_1,...,C_k)\in \Lambda^{(k+1)}_{n\times n}$. Then, the following are equivalent.
	\begin{itemize}
		\item [\rm(i)] ${\bf C}$ has the column $W$-property.
		\item [\rm(ii)] ${\bf C}$ have the cS-$W$ property and column ND-$W$ property.
		\item [\rm(iii)] ${\bf C}$ have the column $W_0$-property and column ND-$W$ property.
	\end{itemize}
\end{theorem}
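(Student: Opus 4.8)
The plan is to establish $(i)\Leftrightarrow(iii)$ outright and then prove $(i)\Rightarrow(ii)$ and $(ii)\Rightarrow(i)$, with essentially all the effort in the last implication. The equivalence $(i)\Leftrightarrow(iii)$ is immediate once Theorem~\ref{t1} is used: that theorem says the column ND-$W$ property is the same as ``every column representative of ${\bf C}$ has nonzero determinant'', whereas the column $W_0$-property asks these determinants to be all nonnegative (resp.\ all nonpositive) with at least one strict; imposing both at once forces all of them to be strictly positive (resp.\ strictly negative), which is exactly the column $W$-property, and the reverse implications are trivial.

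For $(i)\Rightarrow(ii)$, the column ND-$W$ half is precisely Corollary~\ref{cor1}. For the cS-$W$ half I would prove the stronger statement that the hypotheses in Definition~\ref{csww} already force $\x=(x_0,\dots,x_k)={\bf 0}$. The device is to encode $\x$ as $x_0=-D_0y$ and $x_i=D_iy$ ($i\in[k]$) for a single vector $y\in\Rr$ and nonnegative diagonal matrices $D_0,\dots,D_k$ with $\diag(D_0+\cdots+D_k)>0$: coordinate by coordinate, the sign hypotheses $x_i*x_j\geq0$ ($1\le i<j\le k$) and $x_0*x_i\leq0$ ($i\in[k]$) force the nonzero entries among $(x_1)_r,\dots,(x_k)_r$ to share a common sign and $(x_0)_r$ to carry the opposite one, so one can take $|y_r|=\max_{0\le i\le k}|(x_i)_r|$ with an appropriate sign and put $(D_0)_{rr}=-(x_0)_r/y_r$, $(D_i)_{rr}=(x_i)_r/y_r$ (using $y_r=0$, $(D_0)_{rr}=1$ at coordinates where every $(x_i)_r$ vanishes); these entries lie in $[0,1]$ and have positive coordinatewise sum. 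Since $\x\neq{\bf 0}$ gives $y\neq0$, and $C_0x_0=\sum_{i=1}^kC_ix_i$ rewrites as $\big(\sum_{i=0}^kC_iD_i\big)y=0$, the matrix $\sum_iC_iD_i$ is singular, contradicting Theorem~\ref{P1}(ii). Hence $\x={\bf 0}$, and in particular $x_i*x_{i+1}=0$ for $0\le i\le k-1$.

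The hard implication is $(ii)\Rightarrow(i)$. By Theorem~\ref{t1} the column ND-$W$ hypothesis already gives that every column representative of ${\bf C}$ has nonzero determinant, so I only have to exclude two column representatives with determinants of opposite sign. The main obstacle is that cS-$W$ only returns the \emph{consecutive} complementarities $x_i*x_{i+1}=0$, whereas any appeal to column ND-$W$ needs $x_i*x_j=0$ for \emph{all} pairs $i<j$; a crude perturbation that merely makes some $\sum_iC_iD_i$ singular will not bridge this gap. The remedy is to perturb along a path of column representatives in which consecutive members differ in exactly one column and, in that column, use two matrix indices differing by exactly one --- which is legitimate because every map $[n]\to\{0,1,\dots,k\}$ gives a column representative, so an index can be moved one unit at a time. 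Along this path the determinant is everywhere nonzero and reverses sign overall, so some neighbouring pair $S_t,S_{t+1}$ --- agreeing off a single column $r$, which they take from $C_a$ and $C_{a+1}$ --- has determinants of opposite sign, and multilinearity of the determinant in column $r$ makes the affine interpolant $M(\lambda)=\sum_iC_iD_i(\lambda)$ singular at some $\lambda^*\in(0,1)$, where $\sum_iD_i(\lambda^*)=I$. Taking $0\neq y\in\ker M(\lambda^*)$ and setting $x_0=-D_0(\lambda^*)y$, $x_i=D_i(\lambda^*)y$, the cS-$W$ hypotheses hold, and its conclusion at the \emph{consecutive} index pair $(a,a+1)$ gives $(x_a*x_{a+1})_r=\pm(1-\lambda^*)\lambda^*y_r^2=0$, hence $y_r=0$; but then every coordinate activates at most one of the $D_i(\lambda^*)$, so $x_i*x_j=0$ for all pairs, column ND-$W$ forces $\x={\bf 0}$, and $\big(\sum_iD_i(\lambda^*)\big)y=y=0$ contradicts $y\neq0$. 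Therefore all column representative determinants have the same sign and ${\bf C}$ has the column $W$-property. The points I expect to need care are: checking that refining the path one index-step at a time stays within the column representatives while still producing an edge across which the (everywhere nonzero) determinant changes sign; and tracking the sign conventions in $x_0=-D_0y$, $x_i=D_iy$ so that the inequality constraints and the identity $\sum_iD_i(\lambda^*)=I$ both come out correctly.
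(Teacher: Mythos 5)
Your proposal is correct, and while your treatments of $(i)\Rightarrow(ii)$ and $(i)\Leftrightarrow(iii)$ coincide with the paper's (the same encoding $x_0=-D_0y$, $x_i=D_iy$ against Theorem~\ref{P1}(ii), and the same determinant-sign bookkeeping via Theorem~\ref{t1}), your route for the remaining implication is genuinely different. The paper closes the cycle by proving $(ii)\Rightarrow(iii)$: it takes \emph{positive} diagonal matrices $D_i$, feeds $x_0=-D_0x$, $x_i=D_ix$ into the cS-$W$ property to get $D_iD_{i+1}x^2=0$ and hence $x=0$, and then asserts the column $W_0$-property --- a step that tacitly needs a continuity/connectedness argument to pass from ``$\sum_iC_iD_i$ nonsingular for all positive diagonal $D_i$'' to ``all column representative determinants share a weak sign''. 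You instead prove $(ii)\Rightarrow(i)$ directly: you connect two hypothetical column representatives of opposite determinant sign by a discrete path changing one column and one matrix index at a time, locate an adjacent pair $S_t,S_{t+1}$ with opposite signs, and use multilinearity in the single differing column $r$ to produce a singular convex combination $\sum_iC_iD_i(\lambda^*)$ in which only the \emph{consecutive} indices $a,a+1$ overlap, and only at coordinate $r$. This is exactly what lets the cS-$W$ conclusion (which only controls consecutive products $x_i*x_{i+1}$) kill the one offending coordinate, after which the column ND-$W$ property and $\sum_iD_i(\lambda^*)=I$ finish the job. Your version is more self-contained --- it never invokes the $W_0$-property or the implicit limiting argument --- at the cost of the combinatorial path construction; both are valid, and your handling of the consecutive-versus-all-pairs gap is the neater resolution of the one genuine difficulty in this direction.
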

\begin{proof}
	(i)$\implies$(ii): Let \({\bf C}\) has the column \(W\)-property. Then, from Definition \ref{cw0} and Theorem \ref{t1}, \({\bf C}\) also has the column ND-\(W\) property. To show ${\bf C}$  has the  cS-\(W\) property, we proceed with a contrary argument. Suppose there exists $\x=(x_0,x_1,...,x_k)\in\Lambda^{(k+1)}_{n}$ and $s\in \{0,1,2,...,k-1\}$ satisfies 
	\begin{equation}\label{con21} C_0x_{0}=\sum_{i=1}^{k} C_ix_{i},~x_i*x_j\geq 0~\forall1\leq i<j\leq k,~ x_0*x_i\leq 0~\forall i\in[k] \text{ and }x_s*x_{s+1}\neq 0.
	\end{equation}
	We now construct a vector $y\in\Rr$ whose $j^{\rm{th}}$ component is given by $$y_j=\begin{cases}
		-1 & \text{if} ~(x_{0})_j>0\\ 1 & \text{if}  ~(x_{0})_j<0\\1 & \text{if} ~(x_{0})_j=0 ~\text{and}~(x_{i})_j> 0~\text{for some}~ i\in[k]\\-1 & \text{if} ~(x_{0})_j=0 ~\text{and}~(x_{i})_j< 0~\text{for some}~ i\in[k]\\0 & \text{if}~ (x_{0})_j=0 ~\text{and} ~(x_{i})_j= 0 ~\text{for all}~i\in[k] 
	\end{cases}.$$
	Since $x_s*x_{s+1}\neq 0$, ${{y}}$  is a nonzero vector. Next, we construct a set of nonnegative  diagonal matrices $D_{0}, D_{1},...,D_{k}$ which are defined by $$(D_{0})_{jj}=\begin{cases}|(x_{0})_j| & \text{if} ~(x_{0})_j\neq0\\ 0 & \text{if} (x_{0})_j=0~\text{and}~(x_{i})_j\neq 0~\text{for some}~ i\in[k] \\1 & \text{if}~ (x_{0})_j=0 ~\text{and} ~(x_{i})_j= 0 ~\text{for all}~i\in[k]
		,\end{cases}$$ and  for $i\in [k]$,
	$$(D_{i})_{jj}= |(x_{i})_j|.$$
	By an easy verification we can see that  $\text{diag}(D_{0}+ D_{1}+\cdots+D_{k})>0$ with  
	\begin{equation}\label{22}
		x_{0}=-D_{0}y~\text{and}~x_{i}=D_{i}y~\forall i\in [k].
	\end{equation}
	Substituting (\ref{22}) in $C_0x_{0}=\sum_{i=1}^{k} C_ix_{i}$, we get \begin{equation*}
		C_0(-D_{0}y)=\sum_{i=1}^{k} C_iD_{i}(y) \Rightarrow 
		\big(C_0D_{0}+C_1D_{1}+\cdots+C_kD_{k}\big)y=0.
	\end{equation*} 
	This implies that det$(C_0D_{0}+C_1D_{1}+\cdots+C_kD_{k}\big)=0$. Since $x\neq 0$, ${\bf C}$ does not have the column $W$-property from Theorem \ref{cw0}. Thus, we get a contradiction. Therefore, ${\bf C}$ has the cS-$W$ property. \\
	(ii)$\implies$(iii): 	Let  ${\bf C}=(C_0,C_1,...,C_k)\in \Lambda^{(k+1)}_{n\times n}$. It is enough to prove ${\bf C}$ has the column $W_0$-property.  Let us assume a set of positive diagonal matrices $\{D_i\}_{i=0}^{k}$ of order $n$ and a vector $x\in\Rr$ such that   
	\begin{equation}\label{w0}
		({C_0D_0+C_1D_1+\cdots+C_kD_k})x=0.
	\end{equation}
	Let $
	x_0=-D_0x\text{ and }x_i=D_ix~\forall 1\leq i\leq k,$ Then, substituting these into (\ref{w0}), we get
	$$C_0x_{0}=\sum_{i=1}^{k} C_ix_{i}.$$
	Also, 
	$$x_i*x_j=D_ix*(D_jx)=D_iD_jx^2\geq 0~\forall1\leq i<j\leq k,$$
	and 
	$$x_0*x_i=-D_0x*D_ix=-D_0D_ix^2\leq 0~\forall i\in[ k].$$	
	As ${\bf C}$ has the  cS-$W$ property, from the above, $x_i*x_{i+1}=0~\forall 0\leq i\leq k-1$. This implies that $D_ix*D_{i+1}x=0.$ Since $D_i$ is a positive diagonal matrix for $0\leq i\leq k$, $x=0.$ Therefore,  ${\bf C}$ has the column $W_0$-property. \\
	(iii)$\implies$(i): Suppose \({\bf C}\) have the column \(W_0\) and column ND-\(W\) properties. From Definition \ref{cw0}, the column \(W_0\)-property ensures that the determinants of all the column representative matrices are nonnegative (nonpositive). Additionally, from Theorem \ref{t1}, the column ND-\(W\) property implies that the determinants of all the column representative matrices are nonzero. Therefore, the determinants of all the column representative matrices are positive (negative). Hence, \({\bf C}\) has the column \(W\)-property.
\end{proof}
In LCP \cite{LCP}, for a real matrix, we have the  following implication:\begin{equation}\label{19.00}
	P\subseteq \text{Column~Sufficient}\subseteq P_0.\end{equation}
From Theorem \ref{4.20}, we have a direct result as an extension of  implication (\ref{19.00}).
\begin{theorem}\label{4.2}
	Let  ${\bf C}=(C_0,C_1,...,C_k)\in \Lambda^{(k+1)}_{n\times n}$. Consider the following statements:
	\begin{itemize}
		\item [\rm(i)] ${\bf C}$ has the column $W$-property
		\item [\rm(ii)] ${\bf C}$ has the cS-$W$ property.
		\item [\rm(iii)] ${\bf C}$ has the column $W_0$-property
	\end{itemize}
	Then, $\rm(i)\implies\rm(ii)\implies\rm(iii).$
\end{theorem}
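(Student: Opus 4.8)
The plan is to derive Theorem~\ref{4.2} directly from Theorem~\ref{4.20} together with Corollary~\ref{cor1}, so that essentially no new work is required. For the implication $\rm(i)\implies\rm(ii)$: if ${\bf C}$ has the column $W$-property, then by the equivalence $\rm(i)\iff\rm(ii)$ in Theorem~\ref{4.20} it has the cS-$W$ property (in fact it has both the cS-$W$ property and the column ND-$W$ property), so in particular it has the cS-$W$ property. Alternatively one may cite Corollary~\ref{cor1} to get the column ND-$W$ property and then invoke the relevant part of Theorem~\ref{4.20}'s proof; either route is immediate.

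For the implication $\rm(ii)\implies\rm(iii)$: this is exactly the content of the step $\rm(ii)\implies\rm(iii)$ inside the proof of Theorem~\ref{4.20}, but one must be careful, because there the hypothesis was the \emph{conjunction} of the cS-$W$ property and the column ND-$W$ property, whereas here we only assume the cS-$W$ property. Inspecting that argument, however, shows that the column ND-$W$ hypothesis was never used in deducing the column $W_0$-property: starting from $(C_0D_0+\cdots+C_kD_k)x=0$ with each $D_i$ a positive diagonal matrix, setting $x_0=-D_0x$ and $x_i=D_ix$ produces a tuple satisfying the antecedent of the cS-$W$ implication, whence $D_ix*D_{i+1}x=0$ for all $0\le i\le k-1$, and positivity of the $D_i$ forces $x=0$. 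Since $x=0$ means the determinant of every column representative matrix is nonzero, in particular there is a column representative with nonzero determinant, and all column representative determinants share a common sign; after a sign normalization this yields the column $W_0$-property. Thus the cS-$W$ property alone already implies the column $W_0$-property.

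Assembling these two implications gives $\rm(i)\implies\rm(ii)\implies\rm(iii)$, which is the assertion of Theorem~\ref{4.2}. The main (and only mild) obstacle is to note explicitly that the $\rm(ii)\implies\rm(iii)$ portion of the proof of Theorem~\ref{4.20} does not actually invoke the column ND-$W$ hypothesis, so that it applies verbatim under the weaker hypothesis of Theorem~\ref{4.2}; once this observation is made, the proof is a one-line deduction. I would therefore phrase the proof simply as: $\rm(i)\implies\rm(ii)$ follows from Theorem~\ref{4.20}, and $\rm(ii)\implies\rm(iii)$ follows from the argument in the proof of Theorem~\ref{4.20} (the step $\rm(ii)\implies\rm(iii)$ there uses only the cS-$W$ property).
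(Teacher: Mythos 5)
Your overall strategy coincides with the paper's: Theorem~\ref{4.2} is stated there with no separate proof, as a ``direct result'' of Theorem~\ref{4.20}, and your handling of $\rm(i)\implies\rm(ii)$ is exactly that. Your observation that $\rm(ii)\implies\rm(iii)$ does \emph{not} follow formally from Theorem~\ref{4.20} as stated (since its items (ii) and (iii) are conjunctions with the column ND-$W$ property) and that one must check the ND-$W$ hypothesis is never used in that step, is correct and is in fact a point the paper glosses over; that part of your proposal is sound and valuable.

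However, the sentence in which you justify the passage from ``$x=0$'' to the column $W_0$-property is wrong. Knowing that $(C_0D_0+\cdots+C_kD_k)x=0$ forces $x=0$ for all \emph{positive} diagonal $D_i$ tells you that $\det\bigl(\sum_i C_iD_i\bigr)\neq 0$ on the open cone of positive diagonal tuples; it does \emph{not} tell you that ``the determinant of every column representative matrix is nonzero,'' because column representatives correspond to $0$--$1$ diagonal matrices, which lie on the boundary of that cone. Indeed, if your claim were true you would have deduced the column ND-$W$ property and hence (together with the common sign) the full column $W$-property from the cS-$W$ property alone, contradicting the paper's own example of a triple with the cS-$W$ property and a singular column representative. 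The correct deduction is: $\det\bigl(\sum_i C_iD_i\bigr)$ is continuous and nonvanishing on the connected set of positive diagonal tuples, hence of constant sign there; passing to the limit as the $D_i$ tend to $0$--$1$ matrices shows every column representative determinant is $\geq 0$ (or every one $\leq 0$); and at least one is nonzero because $\det\bigl(\sum_i C_iD_i\bigr)$ expands multilinearly as a combination of the column representative determinants, so they cannot all vanish. (To be fair, the paper's own proof of the corresponding step in Theorem~\ref{4.20} also omits this justification, so your error lies in an attempt to fill a gap the paper leaves open; but as written your filling is incorrect and should be replaced by the limiting argument above.)
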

In above theorem, implication (ii)$\implies$(i) will not hold.  To show this we present the following example.
\begin{example}\rm
	Let ${\bf C}=(C_0,C_1,C_2)\in\Lambda^{(3)}_{2\times 2}$ where $$C_0=C_2=\begin{bmatrix}
		1&0\\0&1\\
	\end{bmatrix},~C_2=\begin{bmatrix}
		0&1\\-1&0
	\end{bmatrix}.$$ First we show that ${\bf C}$ has the cS-$W$ property. Let $\x=(x,y,z)\in\Lambda^{(3)}_{2}$ such that 
	$$C_0x=C_1y+C_2z,~x*y\leq 0,x*z\leq 0\text{ and } y*z\geq 0,$$ where $x=(x_1,x_2),y=(y_1,y_2)$ and $z=(z_1,z_2)$ are vectors in $\Rr.$
	From this, we have $x_i y_i\leq0,x_i z_i\leq0~\forall i\in\{1,2\}$ and 
	\begin{equation}
		\begin{aligned}
			x_1=y_2+z_1,~~x_2=-y_1+z_2,\\
		\end{aligned}
	\end{equation}
	Now multiplying both $x_1$ and $x_2$ with $y_1$ and $y_2$ respectively, we get
	\begin{equation}
		\begin{aligned}\label{3.2.6}
			x_1y_1=y_1y_2+z_1y_1\leq 0,~~
			x_2y_2=-y_1y_2+z_2y_2\leq 0\\
		\end{aligned}
	\end{equation}
	
	As $x_i y_i\leq0~\forall i\in\{1,2\}$, we have $x_1y_1+x_2y_2\leq 0$. This implies
	
	$$y_1y_2+z_1y_1-y_1y_2+z_2y_2\leq 0\implies y_1z_1+y_2z_2\leq 0.$$
	Since $y*z\geq 0$, $y_1z_1$ and $y_2z_2$ will be non negative. Therefore, $$y_1z_1+y_2z_2=0\implies y_iz_i=0~\forall i\in\{1,2\}.$$ This gives   $x_1y_1+x_2y_2=0\implies x_1y_1=x_2y_2=0$. Thus $x_iy_i=y_iz_i=0$ for $i\in\{1,2\}$.
	This ensures that ${\bf C}$ has the cS-$W$ property. 
	
	Now we take  a column representative matrix of ${\bf C}$ which is 
	$$R=[{(C_0)}_{.1}, {(C_2)}_{.2}]=\begin{bmatrix}
		1&1\\0&0\\
	\end{bmatrix}.$$ By observation, det$(R)$=0. Hence ${\bf C}$ does not have the column $W$-property. Hence the cS-$W$ property does not imply column $W$-property.
\end{example}
\begin{rem}\rm
	Similar to previous example, the column $W_0$-property does not imply the cS-$W$ property. By taking $k=3$ and $C_0=I,C_1=C_2=0$, we can see that converse of implication (ii)$\implies$(iii) in Theorem 4.2 is not true.  
\end{rem}
For a $Z$-matrix [A matrix with off-diagonal entries are nonpositive], the column-sufficient matrix property is equivalent to the column-sufficient matrix property on \( \Rr_+ \), see \cite{LCP}. For a set of $Z$-matrices, we prove a similar result for the cS-$W$ property. First, we define the cone cS-$W$ property. Then, we establish an equivalence between the cS-$W$ property and the cone cS-$W$ property for $Z$-matrices.
\begin{definition}\label{ccSW}
	We say ${\bf C}=(C_0,C_1,...,C_k)\in\Lambda^{(k+1)}_{n\times n}$ has cone column sufficient-W (cone cS-$W$) property if 
	\begin{equation*}
		\big[C_0x_{0}=\sum_{i=1}^{k} C_ix_{i},~x_{i}\geq 0~\text{and}~ x_{0}* x_{i}\leq 0~\forall i\in[k]~\big]\Rightarrow  x_i*x_{i+1}=0~\forall 0\leq i\leq k-1.
	\end{equation*}
\end{definition}
\begin{theorem}\label{cssm}
	Let ${\bf C}=(C_0,C_1,...,C_k)\in \Lambda_{n\times n}^{(k+1)}$ such that $C_0^{-1}C_i$ be a $Z$-matrix for all $i\in[k]$. Then the following statements are equivalent.\begin{itemize}
		\item [\rm (i)] ${\bf C}$ has the \cs property.
		\item[\rm(ii)]  ${\bf C}$ has the cone \cs property.
	\end{itemize}
\end{theorem}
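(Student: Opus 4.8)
The plan is to prove $(i)\Rightarrow(ii)$ directly, with no use of the $Z$-matrix hypothesis, and to concentrate all the work in $(ii)\Rightarrow(i)$. For $(i)\Rightarrow(ii)$: if $x_0,x_1,\dots,x_k$ satisfy the antecedent of the cone cS-$W$ property, then in particular $x_i\ge 0$ for every $i\in[k]$, so $x_i*x_j\ge 0$ for all $1\le i<j\le k$; combined with $C_0x_0=\sum_{i=1}^k C_ix_i$ and $x_0*x_i\le 0$ this is precisely the antecedent of the cS-$W$ property, and that property then delivers $x_i*x_{i+1}=0$ for all $0\le i\le k-1$.

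For $(ii)\Rightarrow(i)$ I would first normalize to $C_0=I$. Since $C_0$ is invertible, $C_0x_0=\sum_{i=1}^k C_ix_i$ is equivalent to $x_0=\sum_{i=1}^k(C_0^{-1}C_i)x_i$, and every remaining sign condition and conclusion in both properties involves only the vectors $x_i$; hence ${\bf C}$ has the cS-$W$ (respectively cone cS-$W$) property iff $(I,C_0^{-1}C_1,\dots,C_0^{-1}C_k)$ does. So assume $C_0=I$ with each $C_i$ a $Z$-matrix and suppose $(I,C_1,\dots,C_k)$ has the cone cS-$W$ property.

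Let $x_0,\dots,x_k$ satisfy $x_i*x_j\ge 0$ for $1\le i<j\le k$, $x_0=\sum_{i=1}^k C_ix_i$, and $x_0*x_i\le 0$ for $i\in[k]$. For each $r\in[n]$, the condition $x_i*x_j\ge 0$ forces the nonzero numbers among $(x_1)_r,\dots,(x_k)_r$ to have a common sign, so fix $\epsilon_r\in\{+1,-1\}$ with $\epsilon_r(x_i)_r\ge 0$ for all $i\in[k]$ (take $\epsilon_r=1$ when all vanish); put $E=\diag(\epsilon_1,\dots,\epsilon_n)$, let $y_i$ be the coordinatewise absolute value of $x_i$ so that $y_i=Ex_i\ge 0$ for $i\in[k]$, and set $y_0:=\sum_{i=1}^k C_iy_i$. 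The key estimate is the coordinatewise inequality $(y_0)_r\le\epsilon_r(x_0)_r$: since $(y_0)_r-\epsilon_r(x_0)_r=\sum_{i=1}^k\sum_{s=1}^n(C_i)_{rs}(\epsilon_s-\epsilon_r)(x_i)_s$, the only nonzero contributions come from pairs with $s\ne r$ and $\epsilon_s\ne\epsilon_r$, and for those $(C_i)_{rs}\le 0$ (off-diagonal entry of the $Z$-matrix $C_i$) while $(\epsilon_s-\epsilon_r)(x_i)_s=2\epsilon_s(x_i)_s=2|(x_i)_s|\ge 0$, so each term is $\le 0$. Using this with $y_i\ge 0$ and $x_0*x_i\le 0$ gives $(y_0)_r(y_i)_r\le\epsilon_r(x_0)_r\cdot\epsilon_r(x_i)_r=(x_0)_r(x_i)_r\le 0$, i.e.\ $y_0*y_i\le 0$ for all $i\in[k]$. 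Thus $(y_0,y_1,\dots,y_k)$ satisfies the cone cS-$W$ antecedent, so $(ii)$ yields $y_i*y_{i+1}=0$ for $0\le i\le k-1$.

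Finally I transfer back to the $x_i$. For $1\le i\le k-1$, $(x_i)_r(x_{i+1})_r=\epsilon_r^2(x_i)_r(x_{i+1})_r=(y_i)_r(y_{i+1})_r=0$, so $x_i*x_{i+1}=0$. For $i=0$, fix $r$: if $(y_1)_r=0$ then $(x_1)_r=0$ and $(x_0)_r(x_1)_r=0$; if $(y_1)_r>0$ then $(y_0)_r=0$, so $\epsilon_r(x_0)_r\ge(y_0)_r=0$ while $\epsilon_r(x_1)_r=(y_1)_r>0$, hence $(x_0)_r(x_1)_r=(\epsilon_r(x_0)_r)(\epsilon_r(x_1)_r)\ge 0$, which together with $x_0*x_1\le 0$ gives $(x_0)_r(x_1)_r=0$. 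Therefore $x_0*x_1=0$, completing $(ii)\Rightarrow(i)$. I expect the main obstacle to be the $Z$-matrix estimate $(y_0)_r\le\epsilon_r(x_0)_r$ and its use in the $i=0$ step: $y_0$ is in general not $Ex_0$ but only coordinatewise dominated by it, so the complementarity of $x_0$ with $x_1$ has to be extracted from a one-sided bound together with careful sign bookkeeping, rather than from a clean change of variables.
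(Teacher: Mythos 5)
Your proposal is correct and follows essentially the same route as the paper's proof: the trivial direction is identical, and for (ii)$\implies$(i) both arguments replace $x_1,\dots,x_k$ by their absolute values, set $y_0=\sum_{i}C_0^{-1}C_i|x_i|$, use the nonpositive off-diagonal entries of the $Z$-matrices $C_0^{-1}C_i$ to show $y_0*|x_j|\le x_0*x_j\le 0$, invoke the cone cS-$W$ property, and transfer the conclusion back. The only cosmetic difference is that you factor out a per-coordinate sign $\epsilon_r$ and prove the slightly stronger pointwise bound $(y_0)_r\le\epsilon_r(x_0)_r$ before multiplying, whereas the paper bounds the product $(y_0*|x_j|)_r$ directly; both yield the same inequality and the same conclusion.
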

\begin{proof}
	(i)$\implies$(ii): It follows from Definitions \ref{csww} and  \ref{ccSW}.\\
	(ii)$\implies$(i): Assume ${\bf C}$ has the cone \cs property. Let $\x=(x_0,x_1,...,x_k)\in\Lambda^{(k+1)}_{n}$ such that \begin{equation}\label{con} x_i*x_j\geq 0~\forall1\leq i<j\leq k~\text{ and } C_0x_{0}=\sum_{i=1}^{k} C_ix_{i},~ x_0*x_i\leq 0~\forall i\in[k].	
	\end{equation}
	As $C_0$  is invertible, from \ref{con}, we have \begin{equation}\label{con12}
		x_{0}=\sum_{i=1}^{k} C_0^{-1}C_ix_{i}.\end{equation}
	Consider $y_0\in\Rr$ such that
	\begin{equation}\label{zp}
		y_{0}=\sum_{i=1}^{k} C_0^{-1}C_i|x_{i}|.\end{equation} For some coordinate $r\in[n]$\begin{equation*}\begin{aligned}
			(y_{0}*{(|x_j|)})_r&= (\sum_{i=1}^{k} C_0^{-1}C_i|x_{i}|)_r{(|x_j|)}_r\\
			&=\sum_{i=1}^{k} (C_0^{-1}C_i)_{rr}(|x_{i}|)_r{(|x_j|)}_r+\sum_{i=1}^{k}\sum_{r\neq s,s=1}^{n} (C_0^{-1}C_i)_{rs}(|x_{i}|)_s{(|x_j|)}_r.\\
		\end{aligned}
	\end{equation*}
	
	As $C_0^{-1}C_i$ is a $Z$ matrix, then the off diagonal entries of $C_0^{-1}C_i$ will be nonpositive. Since $x_{i}*x_j\geq 0 ~\forall1\leq i,j\leq k$, we have 
	\begin{equation*}\begin{aligned}
			(y_{0}*{(|x_j|)})_r
			\leq&\sum_{i=1}^{k} (C_0^{-1}C_i)_{rr}{(x_{i})}_r{(x_j)}_r+\sum_{i=1}^{k}\sum_{r\neq s,s=1}^{n} (C_0^{-1}C_i)_{rs}{(x_{i})}_s{(x_j)}_r\\
			=& (\sum_{i=1}^{k} C_0^{-1}C_ix_{i})_r{(x_j)}_r\\
		\end{aligned}.
	\end{equation*}
	From (\ref{con12}), we get that $$	y_{0}*{(|x_j|)}\leq x_0*x_j\leq 0. $$
	As ${\bf C}$ has the cone \cs property, then for $\y=(y_0,y_1,...,y_k)=(y_0,|x_1|,|x_2|,,,|x_k|),$ we have $$y_j*y_{j+1}=0,0\leq j\leq k-1.$$ For $j=0,$ $y_0*|x_1|=0$. Then from (\ref{zp}),$$0=y_0*|x_1|\leq x_0*x_1\leq 0\implies  x_0*x_1=0.$$ For $j\in[k-1],$ $$|x_i|*|x_{i+1}|=0\implies  x_i*x_{i+1}=0.$$ 
	Therefore $x_i*x_{i+1}=0,0\leq i\leq k-1.$ Hence {\bf C} has the \cs-property.
\end{proof}
From Theorem \ref{convex} and Theorem \ref{cssm}, we have the following result.
\begin{corollary}
	Let ${\bf C}=(C_0,C_1,...,C_k)\in \Lambda_{n\times n}^{(k+1)}$ such that $C_0^{-1}C_i$ be a $Z$ matrix for all $i\in[k]$. If  ${\bf C}$ has the cone \cs-property, then   $\text{\rm SOL}({\bf C},\d,q)$ is convex for every $q\in\mathbb{R}^n$ and $\d \in  \Lambda^{(k-1)}_{n,++}$.
\end{corollary}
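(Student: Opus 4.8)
The plan is to obtain this corollary as an immediate consequence of the two preceding results, Theorem~\ref{cssm} and Theorem~\ref{convex}, by a short chaining argument. The hypotheses have been set up precisely so that no new work is required: the assumption that $C_0^{-1}C_i$ is a $Z$-matrix for every $i\in[k]$ is exactly the hypothesis of Theorem~\ref{cssm}, and the conclusion of Theorem~\ref{convex} is exactly the desired statement.

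First I would invoke Theorem~\ref{cssm}. Since $C_0^{-1}C_i$ is a $Z$-matrix for all $i\in[k]$, that theorem tells us that the cone \cs property and the \cs property are equivalent for ${\bf C}$. In particular, the hypothesis that ${\bf C}$ has the cone \cs property yields that ${\bf C}$ has the \cs property. (Implicitly this uses that $C_0$ is invertible, which is already built into the standing assumption that $C_0^{-1}C_i$ makes sense.)

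Next I would apply Theorem~\ref{convex} to ${\bf C}$, now knowing that ${\bf C}$ has the \cs property. That theorem states that $\text{\rm SOL}({\bf C},\d,q)$ is convex for every $q\in\mathbb{R}^n$ and every $\d\in\Lambda^{(k-1)}_{n,++}$, which is precisely the conclusion we want. This completes the argument.

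There is essentially no obstacle here; the only thing to check is that the quantifiers on $q$ and $\d$ match between the two theorems, which they do (both range over all $q\in\mathbb{R}^n$ and all $\d\in\Lambda^{(k-1)}_{n,++}$). The substance of the corollary lives entirely in Theorems~\ref{cssm} and~\ref{convex}; the proof itself is just the composition ``cone \cs property $\Rightarrow$ \cs property $\Rightarrow$ convex solution set.''
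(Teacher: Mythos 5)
Your proof is correct and matches the paper's approach exactly: the paper presents this corollary as an immediate consequence of Theorem~\ref{cssm} (cone cS-$W$ implies cS-$W$ under the $Z$-matrix hypothesis) followed by Theorem~\ref{convex} (cS-$W$ implies convexity of the solution set), with no additional argument. Nothing is missing.
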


\section{Conclusion}
In this article, we introduce the column sufficient-$W$ property and prove a convexity result for the solution set of the EHLCP. We also establish a uniqueness result for \( q > 0 \) in the presence of the cS-$W$ property. Additionally, we derive some relations between the column-$W$ ($W_0$) property and the cS-$W$ property with the help of the column ND-$W$ property. Lastly, we present a result for cS-$W$ property on $\Rr_{+}$. 

{\small \bf Conflict of Interest}: {\small The authors have no conflict of interests.}

\end{document}